\newtheorem{thm}{Theorem}[section]
\newtheorem{lem}[thm]{Lemma}
\newtheorem{prop}[thm]{Proposition}
\newtheorem{rem}[thm]{Remark}
\theoremstyle{definition}
\numberwithin{equation}{section}
\newcommand{\C}{\mathbb{C}}
\renewcommand{\div}{\operatorname{div}}
\newcommand{\N}{\mathbb{N}}
\newcommand{\R}{\mathbb{R}}
\newcommand{\supp}{\operatorname{supp}}
\def\hat{\widehat}
\def\tilde{\widetilde}
\def \bfo {\begin {eqnarray*} }
\def \efo {\end {eqnarray*} }
\def \ba {\begin {eqnarray*} }
\def \ea {\end {eqnarray*} }
\def \beq {\begin {eqnarray}}
\def \eeq {\end {eqnarray}}
\def \supp {\hbox{supp }}
\def \p {\partial}
\def\hat{\widehat}
\def\tilde{\widetilde}
\def \bfo {\begin {eqnarray*} }
\def \efo {\end {eqnarray*} }
\def \ba {\begin {eqnarray*} }
\def \ea {\end {eqnarray*} }
\def \beq {\begin {eqnarray}}
\def \eeq {\end {eqnarray}}
\def \supp {\hbox{supp }}
\def \p {\partial}
\begin{document}

 \title[Partial data problems for quasilinear conductivity equations]{Partial data inverse problems for quasilinear conductivity equations}

\author[Kian]{Yavar Kian}

\address
        {Y. Kian, Aix Marseille Univ\\ 
        Universit\'e de Toulon, CNRS\\
        CPT, Marseille, France}

\email{yavar.kian@univ-amu.fr}

\author[Krupchyk]{Katya Krupchyk}
\address
        {K. Krupchyk, Department of Mathematics\\
University of California, Irvine\\
CA 92697-3875, USA }

\email{katya.krupchyk@uci.edu}

\author[Uhlmann]{Gunther Uhlmann}

\address
       {G. Uhlmann, Department of Mathematics\\
       University of Washington\\
       Seattle, WA  98195-4350\\
       USA\\
        and Institute for Advanced Study of the Hong Kong University of Science and Technology}
\email{gunther@math.washington.edu}

\maketitle

\begin{abstract}
We show that the knowledge of the Dirichlet--to--Neumann maps given on an arbitrary open non-empty portion of the boundary of a smooth domain in $\R^n$, $n\ge 2$, for classes of semilinear and quasilinear conductivity equations, determines the nonlinear conductivities uniquely. The main ingredient in the proof is a certain $L^1$--density result involving sums of products of gradients of harmonic functions which vanish on a closed proper subset of the boundary.  

\end{abstract}

\section{Introduction and statement of results}

Let $\Omega\subset \R^n$, $n\ge 2$, be a connected bounded open set with $C^\infty$ boundary.  Let $\omega\in\mathbb{S}^{n-1}=\{\omega\in  \R^n: |\omega|=1\}$ be fixed and let us consider the Dirichlet problem for the following isotropic quasilinear conductivity equation, 
\begin{equation}
\label{eq_int_2}
\begin{cases} 
\div (\gamma(x,u,\omega\cdot \nabla u)\nabla u)=0& \text{in}\quad \Omega,\\
u=\lambda+f & \text{on}\quad \p \Omega. 
\end{cases}
\end{equation} 
Here we assume that the function $\gamma: \overline{\Omega}\times \C\times \C\to \C$ satisfies the following conditions, \begin{itemize}
\item[(i)] the map $\C\times \C\ni (\tau, z)\mapsto \gamma(\cdot, \tau,z)$ is holomorphic with values in the H\"older space   $C^{1,\alpha}(\overline{\Omega})$ with some $0<\alpha<1$, 
\item[(ii)] $\gamma(x,\tau,0)=1$, for all $x\in \Omega$ and all $\tau\in \C$.
\end{itemize}
It is established in Theorem \ref{thm_well-posedness} that under the assumptions (i) and (ii) for each $\lambda\in \C$,  there exist  $\delta_\lambda>0$ and $C_\lambda>0$ such that when $f\in B_{\delta_\lambda}(\p \Omega):=\{f\in C^{2,\alpha}(\p \Omega): \|f\|_{C^{2,\alpha}(\p \Omega)}<\delta_\lambda\}$, the problem  \eqref{eq_int_2} has a unique solution $u=u_{\lambda, f}\in C^{2,\alpha}(\overline{\Omega})$ satisfying $\|u-\lambda\|_{C^{2,\alpha}(\overline{\Omega})}< C_\lambda\delta_\lambda$.

Let $\Gamma\subset \p\Omega$ be an arbitrary non-empty open subset of the boundary $\p \Omega$.  Associated to the problem \eqref{eq_int_2}, we define the partial Dirichlet--to--Neumann map 
\[
\Lambda_\gamma^{\Gamma}(\lambda+f)=(\gamma(x,u,\omega\cdot\nabla u)\p_\nu u)|_{\Gamma},
\]
where $f\in B_{\delta_\lambda}(\p \Omega)$ with $\supp(f)\subset \Gamma$ and $\lambda\in \C$.  Here $\nu$ is the unit outer normal to the boundary.

We are interested in the following inverse boundary problem for the quasilinear conductivity equation \eqref{eq_int_2}: given  the knowledge of the partial Dirichlet--to--Neumann map $\Lambda_\gamma^{\Gamma}$, determine the quasilinear conductivity $\gamma$ in $\overline{\Omega}\times \C\times \C$.   Our first main result gives a solution to this problem. 
\begin{thm}
\label{thm_main} 
Let $\Omega\subset \R^n$, $n\ge 2$, be a connected bounded open set with $C^\infty$ boundary, and let $\Gamma\subset \p \Omega$ be an arbitrary open non-empty subset of the boundary $\p \Omega$. Let $\omega\in \mathbb{S}^{n-1}$ be fixed. Assume that $\gamma_1, \gamma_2: \overline{\Omega}\times \C\times \C\to \C$ satisfy the assumptions (i) and (ii). Let $\Sigma\subset \C$ be a set which has a limit point in $\C$. Then if for all $ \lambda\in \Sigma$, we have
\[
\Lambda_{\gamma_1}^{\Gamma}(\lambda+f)=\Lambda_{\gamma_2}^{\Gamma}(\lambda+f), \quad\forall f\in B_{\delta_\lambda}(\p \Omega), \ \emph{\supp}(f)\subset \Gamma, 
\]
then  $\gamma_1=\gamma_2$ in $\overline{\Omega}\times \C\times \C$. 
\end{thm}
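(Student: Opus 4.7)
The plan is to combine a higher-order linearization of the partial Dirichlet--to--Neumann map in the boundary-data parameters with an $L^{1}$-density result for gradient products of harmonic functions vanishing on $\partial\Omega\setminus\Gamma$, and then exploit holomorphicity in $\lambda$.

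By Theorem~\ref{thm_well-posedness}, for $\lambda\in\Sigma$ fixed and boundary data $\lambda+\sum_{j=1}^{N}\epsilon_{j}f_{j}$ with $f_{j}\in C^{2,\alpha}(\partial\Omega)$ and $\supp(f_{j})\subset\Gamma$, the solution $u_{i}(x;\epsilon)$ of \eqref{eq_int_2} for $\gamma=\gamma_{i}$ depends holomorphically on $\epsilon$ near $0$, with $u_{i}(x;0)=\lambda$. Using $\gamma_{i}(x,\tau,0)\equiv 1$, expand
\[
\gamma_{i}(x,\tau,z)=1+\sum_{k\ge 0,\,l\ge 1}\frac{(\tau-\lambda)^{k}z^{l}}{k!\,l!}\,c^{(i)}_{k,l}(x,\lambda),
\]
with $\lambda\mapsto c^{(i)}_{k,l}(x,\lambda)$ holomorphic. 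The first-order terms $v_{j}:=\partial_{\epsilon_{j}}|_{0}u_{i}$ are independent of $i$: they are harmonic in $\Omega$ with Dirichlet data $f_{j}$, so $v_{j}|_{\partial\Omega\setminus\Gamma}=0$. Higher-order Taylor coefficients $w^{(i)}_{j_{1}\cdots j_{n}}=\partial_{\epsilon_{j_{1}}}\cdots\partial_{\epsilon_{j_{n}}}|_{0}u_{i}$ solve recursively coupled Poisson problems with zero Dirichlet data, driven by the lower-order $v_{j}$ and the coefficients $c^{(i)}_{k,l}$.

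I would then argue by induction on $n\ge 2$. Assuming $c^{(1)}_{k,l}\equiv c^{(2)}_{k,l}$ on $\Omega\times\Sigma$ for all $k+l<n-1$, apply $\partial_{\epsilon_{1}}\cdots\partial_{\epsilon_{n}}|_{0}$ to the identity $\Lambda^{\Gamma}_{\gamma_{1}}=\Lambda^{\Gamma}_{\gamma_{2}}$, pair with one further harmonic function $v_{n+1}$ satisfying $v_{n+1}|_{\partial\Omega\setminus\Gamma}=0$, and integrate by parts. The highest-order correction $w^{(1)}_{1\cdots n}-w^{(2)}_{1\cdots n}$ is then eliminated (both traces vanish on $\partial\Omega$) and the boundary contributions on $\Gamma$ cancel, yielding an interior identity
\[
\sum_{k+l=n-1}\int_{\Omega}\big(c^{(1)}_{k,l}(x,\lambda)-c^{(2)}_{k,l}(x,\lambda)\big)\,P_{k,l}(v_{1},\dots,v_{n+1})(x)\,dx=0,
\]
where each $P_{k,l}$ is an explicit polynomial in the $v_{j}$ built from factors $v_{j}$, $\omega\cdot\nabla v_{j}$ and $\nabla v_{j}\cdot\nabla v_{m}$. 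At the first non-trivial order $n=2$ this bracket reduces to $[(\omega\cdot\nabla v_{1})\nabla v_{2}+(\omega\cdot\nabla v_{2})\nabla v_{1}]\cdot\nabla v_{3}$, and the only coefficient present is $c^{(1)}_{0,1}-c^{(2)}_{0,1}$.

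To turn these integral identities into pointwise statements, I would invoke the paper's central technical ingredient: an $L^{1}(\Omega)$-density theorem asserting that finite sums of the relevant gradient products, built from harmonic functions with Dirichlet data supported in $\Gamma$, are dense in $L^{1}(\Omega)$. Separating the different exponents by rescaling the $v_{j}$ allows one to treat each $(k,l)$ with $k+l=n-1$ individually and conclude $c^{(1)}_{k,l}(\cdot,\lambda)\equiv c^{(2)}_{k,l}(\cdot,\lambda)$ on $\Omega$ for every $\lambda\in\Sigma$. Since $\Sigma$ has a limit point in $\C$ and $c^{(i)}_{k,l}(x,\cdot)$ is entire in $\lambda$, the identity principle upgrades the equality to $\overline{\Omega}\times\C$, and summing the Taylor series gives $\gamma_{1}=\gamma_{2}$.

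The principal obstacle in this program is the density lemma. Because the quasilinear term $\omega\cdot\nabla u$ produces additional directional-derivative factors on top of the gradient products familiar from the classical Calder\'on problem, and because one is restricted to harmonic functions vanishing on a closed proper subset of $\partial\Omega$, verifying the required density will require a genuinely new construction rather than a standard complex-geometric-optics argument, and that is where I expect the bulk of the technical work to lie.
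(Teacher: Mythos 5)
Your overall strategy — holomorphic dependence on the boundary-data parameters, higher-order linearization paired against an extra harmonic test function, an $L^1$-density theorem for products built from harmonic functions supported in $\Gamma$, and the identity principle in $\lambda$ — is the same as the paper's. However, there is a genuine gap in the middle of your argument, and it traces to a suboptimal choice of Taylor parametrization.

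You expand $\gamma_i$ jointly in $(\tau-\lambda,z)$, producing coefficients $c^{(i)}_{k,l}(x,\lambda)=\partial_\tau^k\partial_z^l\gamma_i(x,\lambda,0)$, and your $n$-th order linearization yields a \emph{single} integral identity involving the sum
\[
\sum_{k+l=n-1}\int_\Omega\big(c^{(1)}_{k,l}-c^{(2)}_{k,l}\big)\,P_{k,l}(v_1,\dots,v_{n+1})\,dx=0.
\]
You then claim that ``rescaling the $v_j$'' separates the different $(k,l)$. This step fails. Every $P_{k,l}$ with $k+l=n-1$ involves $n$ first-order variations $v^{(r)}$ plus one factor $\nabla v^{(n+1)}$, namely $k$ factors $v^{(r)}$, $l$ factors $\omega\cdot\nabla v^{(r)}$, and one factor $\nabla v^{(s)}$, symmetrized over all assignments of the indices $\{1,\dots,n\}$ to these roles. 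Thus a uniform rescaling $v_j\mapsto t\,v_j$ multiplies all $P_{k,l}$ by $t^{n+1}$, and anisotropic rescalings $v_j\mapsto t_j v_j$ also act identically on all $P_{k,l}$ precisely because of the symmetrization. Shifting by constants is unavailable since the test functions must vanish on $\partial\Omega\setminus\Gamma$. So the $n-1$ unknowns with $k+l=n-1$ remain coupled, and the density theorem cannot be applied coefficient by coefficient. One could try to separate them by playing with the amplitude prefactors $(\omega\cdot\zeta)^l$ in a CGO ansatz, but that is a delicate asymptotic analysis, not a rescaling, and it is not needed.

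The paper avoids this entirely through a different parametrization. It expands only in $z$, writing $\gamma(x,\lambda,z)=1+\sum_{k\ge1}\partial_z^k\gamma(x,\lambda,0)\frac{z^k}{k!}$ with $\lambda\mapsto\partial_z^k\gamma(x,\lambda,0)$ holomorphic. With this, the $m$-th order linearization around the constant background $u=\lambda$ isolates exactly one new coefficient, $\partial_z^{m-1}\gamma(x,\lambda,0)$: terms with $k\ge m$ carry leftover vanishing factors $\nabla u|_{\varepsilon=0}$; terms with $k\le m-2$, including all of their $\tau$-derivatives, are controlled by the induction hypothesis $\partial_z^k\gamma_1\equiv\partial_z^k\gamma_2$ on $\overline\Omega\times\C$ (which is in force for all $\lambda$ after the previous holomorphy step, via \eqref{eq_5_8_0} and \eqref{eq_5_10}); and any $\varepsilon$-derivative landing on the prefactor $\partial_z^{m-1}\gamma(x,u,0)$ leaves too few derivatives to clear the $m$ vanishing gradient factors. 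What remains is exactly the trilinear form of Theorem~\ref{thm_main_2}, with a single unknown. Your $c_{k,l}$ with $k\ge1$ are then recovered for free, since $c_{k,l}(x,\lambda)=\partial_\lambda^k c_{0,l}(x,\lambda)$; equality of $c_{0,l}(\cdot,\lambda)$ for $\lambda\in\Sigma$, together with holomorphy, forces equality of all $\lambda$-derivatives. The fix to your proposal is therefore not a better separation trick, but to drop the $\tau$-expansion, run the induction in the $z$-order alone, and upgrade to all of $\C$ by the identity principle before each inductive step. One further small correction: the required density result (Theorem~\ref{thm_main_2}) is still proved by a complex-geometric-optics exponential ansatz; the new ingredient is the careful $C^1$ control of the correction term and a Runge-type approximation in $W^{1,m+1}$, not an abandonment of the CGO framework as your last paragraph suggests.
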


Next let us consider the  Dirichlet problem for the following isotropic semilinear conductivity equation, 
\begin{equation}
\label{eq_int_0_1}
\begin{cases} 
\div (\gamma(x,u)\nabla u)=0& \text{in}\quad \Omega,\\
u=f & \text{on}\quad \p \Omega. 
\end{cases}
\end{equation}
Here we assume that the function $\gamma: \overline{\Omega}\times \C \to \C$ satisfies the following conditions, \begin{itemize}
\item[(a)] the map $\C\ni \tau\mapsto \gamma(\cdot, \tau)$ is holomorphic with values in   $C^{1,\alpha}(\overline{\Omega})$ with some $0<\alpha<1$, 
\item[(b)] $\gamma(x, 0)=1$, for all $x\in \Omega$.
\end{itemize}
 It is shown in Theorem \ref{thm_well-posedness} that under the  assumptions (a) and (b),  there exist $\delta>0$ and $C>0$ such that when $f\in B_\delta(\p \Omega):=\{f\in C^{2,\alpha}(\p \Omega): \|f\|_{C^{2,\alpha}(\p \Omega)}<\delta\}$, the problem  \eqref{eq_int_0_1} has a unique solution $u=u_{ f}\in C^{2,\alpha}(\overline{\Omega})$ satisfying $\|u\|_{C^{2,\alpha}(\overline{\Omega})}<C\delta$. 

Associated to the problem \eqref{eq_int_0_1}, we define the partial Dirichlet--to--Neumann map 
\[
\Lambda_\gamma^{\Gamma}(f)=(\gamma(x,u)\p_\nu u)|_{\Gamma},
\]
where $f\in B_\delta(\p \Omega)$ with $\supp(f)\subset \Gamma$.   Our second main result is as follows. 
\begin{thm}
\label{thm_main_semilinear} 
Let $\Omega\subset \R^n$, $n\ge 2$, be a connected bounded open set with $C^\infty$ boundary, and let $\Gamma\subset \p \Omega$ be an arbitrary open non-empty subset of the boundary $\p \Omega$. Let $\gamma_1, \gamma_2: \overline{\Omega}\times \C\to \C$ satisfy the assumptions (a) and (b).  If $\Lambda_{\gamma_1}^\Gamma=\Lambda_{\gamma_2}^\Gamma$ then $\gamma_1=\gamma_2$ in $\overline{\Omega}\times \C$. 
\end{thm}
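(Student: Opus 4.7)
The plan is to prove Theorem~\ref{thm_main_semilinear} by induction on the order of linearization of the partial Dirichlet--to--Neumann map at the zero boundary datum, combined with the $L^1$--density result for products of gradients of harmonic functions announced in the abstract. The well-posedness result invoked above gives that $f\mapsto u_{f}$, and hence $\Lambda_\gamma^{\Gamma}$, is holomorphic in $f$ in a $C^{2,\alpha}$-neighborhood of $0$ on $\partial\Omega$. Consequently, the hypothesis $\Lambda_{\gamma_1}^\Gamma = \Lambda_{\gamma_2}^\Gamma$ forces all multi-linear Fr\'echet derivatives of the two DN maps at $f=0$ to coincide. Writing the Taylor expansion $\gamma(x,\tau) = 1 + \sum_{k\ge 1} q_k(x)\,\tau^k/k!$ with $q_k\in C^{1,\alpha}(\overline{\Omega})$, the holomorphy of $\gamma$ in $\tau$ reduces the task to establishing $q_k^1 = q_k^2$ for every $k\ge 1$.

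To implement the induction, I take multi-parameter boundary data $f = \sum_{j=1}^N \varepsilon_j f_j$ with each $f_j\in C^{2,\alpha}(\partial\Omega)$ supported in $\Gamma$. The first-order linearization $v_j := \partial_{\varepsilon_j}|_{\varepsilon=0} u_f$ is the harmonic extension of $f_j$ and, in particular, vanishes on $\partial\Omega\setminus\Gamma$. For the solution $u^{(\ell)}_f$ associated with $\gamma_\ell$, setting $w^{(\ell)} := \partial_{\varepsilon_1}\cdots\partial_{\varepsilon_N}|_{\varepsilon=0} u^{(\ell)}_f$, a direct differentiation of $\div(\gamma_\ell(x,u)\nabla u) = 0$ yields a Poisson equation of the form
\[
\Delta w^{(\ell)} = -\div\!\left(q_{N-1}^\ell\,\nabla(v_1\cdots v_N)\right) + R^{(\ell)}, \qquad w^{(\ell)}|_{\partial\Omega}=0,
\]
where $R^{(\ell)}$ depends only on $q_1^\ell,\ldots,q_{N-2}^\ell$ and on the lower-order mixed derivatives of $u^{(\ell)}_f$. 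Likewise, $N$-fold differentiation of $\Lambda_{\gamma_\ell}^\Gamma$ on $\Gamma$ produces a boundary relation between $\partial_\nu w^{(\ell)}|_\Gamma$ and the same Taylor coefficients.

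Assuming inductively that $q_k^1 = q_k^2$ for $k\le N-2$, the contributions of $R^{(1)}$ and $R^{(2)}$ cancel, as does the lower-order part of the boundary relation on $\Gamma$. Pairing the equation for $w := w^{(1)} - w^{(2)}$ against an additional harmonic test function $v_{N+1}$ with $v_{N+1}|_{\partial\Omega\setminus\Gamma}=0$ via Green's identity, and substituting the surviving DN equality on $\Gamma$, the boundary contributions cancel and I obtain the integral identity
\[
\int_\Omega (q_{N-1}^1 - q_{N-1}^2)\,\nabla(v_1\cdots v_N)\cdot\nabla v_{N+1}\,dx = 0,
\]
valid for every choice of harmonic $v_1,\ldots,v_{N+1}$ in $\overline{\Omega}$ vanishing on $\partial\Omega\setminus\Gamma$. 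Applying the $L^1$--density statement announced in the abstract, sums of such integrands fill out $L^1(\Omega)$, forcing $q_{N-1}^1 = q_{N-1}^2$ in $\Omega$ and closing the induction; the holomorphy of $\gamma$ in $\tau$ then gives $\gamma_1=\gamma_2$ on $\overline{\Omega}\times\C$.

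The main obstacle is the density statement itself. In the full-data situation $\Gamma=\partial\Omega$, one would insert complex exponential harmonic functions $e^{x\cdot\zeta}$ with $\zeta\cdot\zeta=0$ and read off the Fourier transform of $q_{N-1}^1 - q_{N-1}^2$, but the partial-data requirement that every $v_j$ vanish on the closed proper subset $\partial\Omega\setminus\Gamma$ rules out this construction directly. The natural route, which I expect to be implemented in the body of the paper, is a Runge-type approximation argument, based on unique continuation for harmonic functions across $\Gamma$, that reduces the partial-data density to the classical Calder\'on-type identity; one then concludes by the standard complex exponential harmonic solutions. A secondary technical point is the careful bookkeeping in the $N$-fold linearization needed to guarantee that every lower-order contribution truly cancels under the inductive hypothesis.
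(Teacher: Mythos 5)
Your proposal is correct and follows essentially the same route as the paper: holomorphy of the solution map from the well-posedness theorem, $N$-fold parametric linearization at $f=0$ inducting on $N$, Green's identity to cancel boundary terms and extract the integral identity $\int_\Omega (q_{N-1}^1-q_{N-1}^2)\nabla(v_1\cdots v_N)\cdot\nabla v_{N+1}\,dx=0$ (which, since $\nabla(v_1\cdots v_N)=\sum_k\prod_{r\ne k}v_r\,\nabla v_k$, is exactly the paper's orthogonality relation), and then the partial-data density result. Your description of how that density result is proved --- a Runge-type approximation reducing to a local/exponential-solution argument --- also matches the paper's Section~2 strategy.
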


\begin{rem}
An analog of Theorem \ref{thm_main_semilinear} in the full data case, i.e. when $\Gamma=\p \Omega$, was proved in  \cite{Sun_96} where instead of working with small Dirichlet data one considers small perturbations of constant Dirichlet data as in \eqref{eq_int_2}. Furthermore, it was assumed in \cite{Sun_96}  that the semilinear conductivity  is strictly positive while no analyticity was required. The proof of  \cite{Sun_96} relies  on a first order linearization of the Dirichlet--to--Neumann map at constant Dirichlet boundary values which leads to the inverse boundary problem for the linear conductivity equation and therefore, an application of results of \cite{SU_1987}  and \cite{Nachman_1996} for the linear conductivity problem  in dimensions $n\ge 3$ and in dimension $n=2$, respectively, gives the recovery of the semilinear condutivity. 
\end{rem}

\begin{rem}
To the best of our knowledge Theorem \ref{thm_main}  is new even in the full data case. Indeed, in the full data case, so far authors have only considered the recovery of conductivities of the form $\gamma(x,u)$, see e.g. \cite{Sun_96,Sun_Uhlm_97},  or  of the form $\gamma(u, \nabla u)$, see e.g.  \cite{Mun_Uhl_2018}, \cite{Shankar_2019}, or conductivities which depend $x$ and $\nabla u$ in some specific way, see e.g. \cite{CFeiz_2020_1}.
We obtain in  Theorem \ref{thm_main}, for what seems to be the first time, the recovery of some general class of quasilinear conductivities of the form $\gamma(x,u, \omega\cdot \nabla u)$,  depending on the space variable, the solution, as well as the derivative of the solution in a fixed direction.
\end{rem}

\begin{rem} To the best of our knowledge, the partial data results of Theorem \ref{thm_main} and Theorem \ref{thm_main_semilinear} are the first partial data results for nonlinear conductivity equations. 
\end{rem}

\begin{rem} The assumption that the conductivity is holomorphic as a function $\C\ni \tau\mapsto \gamma(\cdot, \tau,\cdot)$ in Theorem \ref{thm_main} is motivated by the proof of the solvability of the forward problem and the differentiability with respect to the boundary data. This assumption could perhaps be weakened and one could show that the full knowledge of the partial Dirichlet--to--Neumman map $\Lambda_{\gamma}^{\Gamma}$ determines the conductivity $\gamma$.  As the main focus of this paper is on establishing the partial data inverse results, we decided not to elaborate upon this issue further.
\end{rem}

\begin{rem}
It might be interesting to note that an analog of the partial data results of Theorem \ref{thm_main} and Theorem \ref{thm_main_semilinear}  is still not known in the case of the linear conductivity equation  in dimensions $n\ge 3$.   We refer to  
\cite{Iman_Uh_Yam_2010} for the corresponding partial data result  for the linear conductivity equation in dimension $n=2$.
\end{rem}

We remark that starting with \cite{Kurylev_Lassas_Uhlmann_2018}, it has been known that nonlinearity may be helpful when solving inverse problems for hyperbolic PDE. Analogous phenomena for nonlinear elliptic equations have been revealed and exploited in \cite{Feizmohammadi_Oksanen}, \cite{LLLS}, see also \cite{LLLS_partial}, \cite{Krup_Uhlmann_non_linear_1}, \cite{Krup_Uhlmann_2}, \cite{Krup_Uhlmann_non_magnetic}, \cite{Lai_Ting}.  A noteworthy aspect of all of these works is that the presence of a nonlinearity enables one to solve inverse problems for nonlinear PDE in situations where the corresponding inverse problems for linear equations are still open. The present paper  is also concerned with illustrating this general phenomenon.

Let us proceed to discuss the main ideas of the proofs of   Theorem \ref{thm_main} and Theorem \ref{thm_main_semilinear}.   Using the technique of higher order linearizations of the partial Dirichlet--to--Neumann map, introduced in  \cite{Feizmohammadi_Oksanen}, \cite{LLLS}, see also \cite{Sun_96}, \cite{Sun_Uhlm_97} for the use of  the second linearization,  we reduce the proof of Theorem \ref{thm_main}  to the following density result.  
\begin{thm}
\label{thm_main_2}
Let $\Omega\subset \R^n$, $n\ge 2$, be a connected bounded open set with $C^\infty$ boundary, let $ \Gamma\subset \p \Omega$ be an open non-empty subset of $\p \Omega$, let $\omega\in \mathbb{S}^{n-1}$ be fixed, and let $m=2,3, \dots,$ be fixed. Let $f\in L^\infty(\Omega)$ be such that 
\begin{equation}
\label{eq_int_3}
\int_\Omega f \bigg(\sum_{k=1}^m  \prod_{r=1,r\ne k}^m (\omega\cdot \nabla u_r)\nabla u_k\bigg)\cdot \nabla u_{m+1}dx=0,
\end{equation}
for all  functions $u_l\in C^\infty(\overline{\Omega})$ harmonic in $\Omega$ with $\emph{\supp}(u_l|_{\p \Omega})\subset \Gamma$, $l=1,\dots, m+1$. Then $f=0$ in $\Omega$.  
\end{thm}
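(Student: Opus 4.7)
The plan is to reduce \eqref{eq_int_3} by polarization to a bilinear identity in two harmonic test functions, insert Calder\'on--type complex exponentials, replace them by admissible harmonic approximants via a Runge density, and conclude by Fourier inversion. The integrand in \eqref{eq_int_3} is an $(m+1)$-linear form that is symmetric in $u_1,\dots,u_m$; since a symmetric multilinear form is determined by its diagonal, the hypothesis is equivalent to
\begin{equation*}
\int_\Omega f\,(\omega\cdot \nabla u)^{m-1}\,\nabla u\cdot \nabla v\,dx=0
\end{equation*}
for all admissible harmonic $u,v$ (that is, harmonic in $\Omega$ with boundary trace supported in $\Gamma$). Writing $u=u_0+\epsilon \tilde u$, extracting the coefficient of $\epsilon^1$, and then choosing $u_0$ to approximate the linear function $w_0(x)=x\cdot \omega$ (for which $\omega\cdot \nabla w_0\equiv 1$ and $\nabla w_0\equiv \omega$), I would reduce further to the bilinear identity
\begin{equation*}
\int_\Omega f\,\bigl[(m-1)(\omega\cdot \nabla \tilde u)(\omega\cdot \nabla v)+\nabla \tilde u\cdot \nabla v\bigr]\,dx=0.
\end{equation*}

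The key enabling tool will be a partial-data Runge density: for any subdomain $\Omega_0\Subset \Omega$ with $\Omega\setminus \overline{\Omega_0}$ connected, and any harmonic function $W$ in a neighborhood of $\overline{\Omega_0}$, there is a sequence of harmonic functions $W_j\in C^\infty(\overline \Omega)$ with $\supp(W_j|_{\p \Omega})\subset \Gamma$ such that $W_j\to W$ in $C^1(\overline{\Omega_0})$. The proof uses the standard Hahn--Banach argument: any measure $\mu\in \mathcal E'(\Omega_0)$ annihilating all such $W_j$ has a Newton potential that is harmonic outside $\overline{\Omega_0}$ and whose Dirichlet and Neumann traces vanish on $\Gamma$ (by a duality computation with the boundary integral), and unique continuation for the Laplacian then forces the potential---and hence $\mu$---to vanish. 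This density underpins both the approximation of $w_0$ above and the CGO substitution below.

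For the Fourier-inversion step, fix $\xi\in \R^n\setminus\{0\}$, pick a unit vector $\eta\perp \xi$, and set $\rho_1=\tfrac12(|\xi|\eta+i\xi)$ and $\rho_2=\tfrac12(-|\xi|\eta+i\xi)$, so that $\rho_j\cdot \rho_j=0$ and $\rho_1+\rho_2=i\xi$. The harmonic complex exponentials $\tilde u(x)=e^{\rho_1\cdot x}$, $v(x)=e^{\rho_2\cdot x}$ then satisfy
\begin{equation*}
(m-1)(\omega\cdot \nabla \tilde u)(\omega\cdot \nabla v)+\nabla \tilde u\cdot \nabla v = c(\xi,\eta)\,e^{i\xi\cdot x},
\end{equation*}
with $c(\xi,\eta) = -\tfrac{m-1}{4}\bigl[(\omega\cdot \xi)^2+|\xi|^2(\omega\cdot \eta)^2\bigr]-\tfrac{|\xi|^2}{2}$, which is strictly negative for $\xi\ne 0$. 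Approximating $\tilde u$ and $v$ by admissible harmonic functions on an exhausting sequence $\Omega_k\nearrow \Omega$ via the Runge density and passing to the limit in the bilinear identity would give $c(\xi,\eta)\,\hat f(\xi)=0$, hence $\hat f(\xi)=0$. The exceptional case $\xi=0$ is handled by the analogous choice $\rho_2=-\rho_1$ with $\omega\cdot \rho_1\ne 0$, and Fourier inversion (after extending $f$ by zero outside $\Omega$) then concludes $f\equiv 0$.

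The main obstacle will be the passage to the limit when inserting the Runge approximants: the density provides $C^1$ convergence only on compactly contained subdomains $\Omega_0\Subset \Omega$, whereas the integral identity is tested on all of $\Omega$, and the approximants are a priori uncontrolled in the boundary layer $\Omega\setminus \Omega_0$. Controlling the boundary-layer contribution will require uniform a priori estimates on the admissible harmonic functions (through an appropriate choice of their boundary data, e.g.\ uniformly bounded in a Sobolev norm, combined with trace theorems and the maximum principle), together with a careful diagonal choice of the exhaustion so that the boundary-layer integral vanishes in the limit. A secondary technical point is the case $n=2$, where the CGO parameters have fewer free degrees of freedom.
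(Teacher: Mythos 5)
Your polarization step is correct and identical in spirit to the paper's first move (taking $u_1=\dots=u_m$), and your computation of the constant $c(\xi,\eta)$ for the complex exponentials is accurate. However, the rest of the argument has a genuine gap that the paper's proof was specifically designed to avoid.

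The fatal step is the attempt to eliminate the $(m-1)$-th power by approximating the linear function $w_0(x)=x\cdot\omega$ --- and, more generally, the CGO exponentials $e^{\rho\cdot x}$ --- by \emph{admissible} harmonic functions (those whose traces vanish outside $\Gamma$). These target functions are not themselves admissible, and any Runge-type density only gives $C^1$-convergence on compacts $\Omega_0\Subset\Omega$; in fact, if a sequence of admissible harmonic functions were uniformly controlled up to $\p\Omega$ and converged to $w_0$ on some $\Omega_0$, then by compactness plus unique continuation the limit would have to be admissible, contradicting $w_0\ne 0$ on $\tilde\Gamma$. So the approximants \emph{must} blow up in the boundary layer, and your proposed fix (uniform a priori bounds on the boundary data, combined with trace and maximum-principle estimates) cannot succeed. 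Since the orthogonality identity integrates over all of $\Omega$, the integral over $\Omega\setminus\Omega_0$ is genuinely uncontrolled: writing $\int_\Omega=\int_{\Omega_0}+\int_{\Omega\setminus\Omega_0}=0$ only tells you the two pieces cancel, not that the first one vanishes in the limit. Consequently you cannot conclude $\hat f(\xi)=0$, and the direct Fourier-inversion route collapses.

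The paper circumvents this by refusing to approximate inadmissible functions at all. In Proposition~\ref{prop_local} it builds the test functions $v(x,\zeta)=e^{-\frac{i}{h}x\cdot\zeta}+r(x,\zeta)$, where $r$ is the exact harmonic correction making the trace vanish on $\tilde\Gamma$, and then proves quantitative $C^1$ bounds \eqref{eq_6_7}--\eqref{eq_6_8} showing that $r$ is \emph{exponentially small compared to the main exponential} precisely when $\operatorname{Im}\zeta_1\ge 0$. This yields the estimate \eqref{eq_6_17}, i.e.\ exponential decay of $\int_\Omega f\,e^{-\frac{mi}{h}x\cdot z}\,dx$ in a complex cone of frequencies $z$, from which one deduces (by the analytic-continuation argument of \cite{DKSU}) that $f$ vanishes only \emph{locally}, near the chosen boundary point in $\Gamma$. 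The global conclusion is then obtained by a propagation argument, and here the $W^{1,m+1}$-Runge density (Lemma~\ref{lem_Runge}) is used in the \emph{opposite} direction from yours: it approximates admissible harmonic functions on a \emph{shrunken} domain $\Omega_1\subset\Omega$ by restrictions of Green-potential solutions from an enlarged domain $\Omega_2$, giving $W^{1,m+1}(\Omega_1)$-convergence on all of $\Omega_1$ (not merely on compacts) because the approximants are built to vanish on the right part of $\p\Omega_1$ from the start. That is the step that your proposal is missing a substitute for, and without it the boundary-layer obstacle you flagged cannot be removed.
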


Similarly, using higher order linearizations of the partial Dirichlet--to--Neumann map, we show that Theorem \ref{thm_main_semilinear}  will follow from the following density result.
\begin{thm}
\label{thm_main_2_semilinear}
Let $\Omega\subset \R^n$, $n\ge 2$, be a connected bounded open set with $C^\infty$ boundary, let $ \Gamma\subset \p \Omega$ be an open non-empty subset of $\p \Omega$, and let $m=2,3, \dots,$ be fixed. Let $f\in L^\infty(\Omega)$ be such that 
\begin{equation}
\label{eq_int_3_semilinear}
\int_\Omega f \bigg(\sum_{k=1}^m  \prod_{r=1,r\ne k}^m u_r\nabla u_k\bigg)\cdot \nabla u_{m+1}dx=0,
\end{equation}
for all  functions $u_l\in C^\infty(\overline{\Omega})$ harmonic in $\Omega$ with $\emph{\supp}(u_l|_{\p \Omega})\subset \Gamma$, $l=1,\dots, m+1$. Then $f=0$ in $\Omega$.  
\end{thm}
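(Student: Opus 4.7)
The plan is to combine the multilinear structure of the hypothesis with a Runge-type density result and classical complex geometrical optics (CGO) solutions, organized in three steps.

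\emph{Reduction.} The algebraic identity $\sum_{k=1}^{m}\prod_{r\ne k}u_{r}\,\nabla u_{k}=\nabla(u_{1}\cdots u_{m})$ rewrites the hypothesis as
\[
\int_{\Omega} f\,\nabla(u_{1}\cdots u_{m})\cdot\nabla u_{m+1}\,dx=0
\]
for every $(m+1)$-tuple in $H_{\Gamma}$, the class of $u\in C^{\infty}(\overline{\Omega})$ harmonic in $\Omega$ with $\supp(u|_{\partial\Omega})\subset\Gamma$. Specializing $u_{1}=\cdots=u_{m}=v$ and $u_{m+1}=w$ reduces the problem to showing that
\[
\int_{\Omega} f\,\nabla(v^{m})\cdot\nabla w\,dx=0 \quad \text{for all } v,w\in H_{\Gamma}
\]
implies $f\equiv 0$.

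\emph{Runge density.} I next establish that $H_{\Gamma}$ is dense, in the topology of $C^{\infty}_{\mathrm{loc}}(\Omega)$, in the space of all functions harmonic in $\Omega$. By Hahn--Banach this amounts to showing that any $\phi\in L^{2}(\Omega)$ with compact support annihilating $H_{\Gamma}$ also annihilates every $h\in C^{\infty}(\overline{\Omega})$ harmonic in $\Omega$. Given such a $\phi$, solve $\Delta V=\phi$ in $\Omega$ with $V|_{\partial\Omega}=0$: Green's identity gives $\int\phi u=\int_{\Gamma}(\partial_{\nu}V)u\,d\sigma$ for $u\in H_{\Gamma}$, and varying $u|_{\Gamma}$ over $C^{\infty}_{c}(\Gamma)$ forces $\partial_{\nu}V=0$ on $\Gamma$. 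Since $\phi$ is compactly supported, $V$ is harmonic near $\partial\Omega$; the vanishing Cauchy data on $\Gamma$ combined with Holmgren's unique continuation give $V\equiv 0$ in a neighborhood of $\partial\Omega$ in $\Omega$ (choosing $\supp\phi$ so that its complement in $\Omega$ is connected, which suffices for $L^{2}_{\mathrm{loc}}$-density). Hence $\partial_{\nu}V\equiv 0$ on all of $\partial\Omega$, and a second application of Green's identity yields $\int\phi h=0$ for every harmonic $h$, establishing the density.

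\emph{CGO and Fourier.} For any $\xi\in\R^{n}\setminus\{0\}$ one can find $\zeta,\eta\in\C^{n}$ satisfying $\zeta\cdot\zeta=\eta\cdot\eta=0$ and $m\zeta+\eta=-i\xi$; from $\eta\cdot\eta=0$ one reads off $\zeta\cdot\xi=-i|\xi|^{2}/(2m)$, hence $\zeta\cdot\eta=-|\xi|^{2}/(2m)\ne 0$, and such $\zeta$ exists in every dimension $n\ge 2$. The CGOs $v(x)=e^{\zeta\cdot x}$ and $w(x)=e^{\eta\cdot x}$ are harmonic on $\R^{n}$ and smooth on $\overline{\Omega}$; approximating them in $C^{\infty}_{\mathrm{loc}}(\Omega)$ by $v_{j},w_{j}\in H_{\Gamma}$ via Step 2 and passing to the limit in the diagonal identity yields
\[
m(\zeta\cdot\eta)\int_{\Omega} f(x)\,e^{-i\xi\cdot x}\,dx=0\qquad(\xi\ne 0).
\]
Thus the Fourier transform of the zero extension of $f$ to $\R^{n}$ vanishes away from the origin; since this extension lies in $L^{\infty}(\R^{n})$ with compact support, it must be identically zero, so $f=0$ in $\Omega$.

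The main technical obstacle is the limit passage in the final step. The Runge density of Step 2 is only interior, whereas the integral runs over the full $\Omega$, and the approximants $v_{j},w_{j}$ need not be uniformly bounded up to $\partial\Omega$. Since $f$ is merely in $L^{\infty}$, one cannot integrate by parts to move the derivative onto $f$; the contribution from the boundary collar $\{x\in\Omega:\mathrm{dist}(x,\partial\Omega)<\varepsilon\}$ must be handled separately, either by arranging the Runge approximants to carry a uniform $H^{1}(\Omega)$ or $L^{p}(\Omega)$ bound, or by a duality/weak-$*$ argument exploiting $\|f\|_{L^{\infty}}$ together with the vanishing measure of the collar as $\varepsilon\to 0$.
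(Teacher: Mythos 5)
Your Step 1 reduction via the algebraic identity $\sum_{k}\prod_{r\neq k}u_{r}\nabla u_{k}=\nabla(u_{1}\cdots u_{m})$ is correct and clean, and your Step 2 interior Runge density of $H_{\Gamma}$ in $C^{\infty}_{\mathrm{loc}}(\Omega)$ is a standard fact, proved correctly. The difficulty you flag at the end of Step 3, however, is not a technical loose end but a structural obstruction that breaks the approach.

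The issue is that you are trying to approximate the CGO functions $v=e^{\zeta\cdot x}$, $w=e^{\eta\cdot x}$ -- which do \emph{not} vanish on $\tilde\Gamma=\partial\Omega\setminus\Gamma$ -- by elements of $H_{\Gamma}$, which \emph{do} vanish there. Any topology strong enough to let you pass to the limit in $\int_{\Omega}f\,\nabla(v^{m})\cdot\nabla w\,dx$ (with $f\in L^{\infty}$) must control gradients in $L^{p}(\Omega)$ up to the boundary, e.g.\ $W^{1,m+1}(\Omega)$. But in such a topology $H_{\Gamma}$ is closed inside the set of functions vanishing on $\tilde\Gamma$: traces are continuous, so the limit of functions vanishing on $\tilde\Gamma$ still vanishes there, whereas $e^{\zeta\cdot x}$ does not. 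Thus the density you need simply does not hold, and the collar argument you sketch cannot rescue it: no matter how thin the collar, the approximants $v_{j}$ are forced toward zero on $\tilde\Gamma$ while $e^{\zeta\cdot x}$ is not, and the discrepancy is $O(1)$ in the relevant norms. A second way to see that something must fail: your argument uses nothing about $m\geq 2$ in an essential way, and applied to $m=1$ it would yield the density result $\int_{\Omega}f\,\nabla u_{1}\cdot\nabla u_{2}\,dx=0$ for all $u_{1},u_{2}\in H_{\Gamma}\Rightarrow f=0$. That statement is the linearized partial-data Calder\'on problem, which is open for $n\geq 3$, as the paper itself notes.

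The paper avoids this by never approximating global CGO functions across the boundary. Instead it first proves a \emph{local} vanishing result near a boundary point of $\Gamma$ using exponentially weighted harmonic functions $e^{-ix\cdot\zeta/h}+r(x,\zeta)$ with a correction $r$ enforcing the boundary condition and controlled in $C^{1}(\overline{\Omega})$; it then propagates this local vanishing inward using a Runge-type lemma (Lemma 2.7) that approximates, in $W^{1,m+1}(\Omega_{1})$, harmonic functions on a \emph{smaller} domain $\Omega_{1}$ by restrictions of Green potentials from a larger domain $\Omega_{2}$. Crucially, in that lemma the target functions already satisfy the vanishing boundary condition on $\partial\Omega_{1}\cap\partial\Omega_{2}$, so there is no clash of boundary conditions. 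The extra flexibility from $m\geq 2$ is what makes the local estimates close; it is not exploitable by a direct Fourier argument in the partial-data setting. Your proposal therefore needs to be restructured along these local-to-global lines rather than patched.
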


Theorem \ref{thm_main_2} and Theorem \ref{thm_main_2_semilinear}  can be viewed as extensions of the results of \cite{DKSU} and \cite{Krup_Uhlmann_2}.  Indeed,  it was proved in \cite{DKSU} that the linear span of the set of products of harmonic functions in $\Omega$ which vanish on a closed proper subset of the boundary is dense in $L^1(\Omega)$, and this density result was extended  in  \cite{Krup_Uhlmann_2} by showing that the linear span of the set of scalar products of gradients of harmonic functions in  $\Omega$ which vanish on a closed proper subset of the boundary is also dense in $L^1(\Omega)$.

To prove  Theorem \ref{thm_main_2}, we shall follow the general strategy of the work \cite{DKSU}, see also  \cite{Krup_Uhlmann_2}. We first establish a corresponding local result in a neighborhood of a boundary point in $\Gamma$ assuming, as we may, that $\Gamma$ is a small open neighborhood of this point, see Proposition \ref{prop_local} below. We then show how to pass from this local result to the global one of  Theorem \ref{thm_main_2}. The essential difference here compared with the works \cite{DKSU},  \cite{Krup_Uhlmann_2} is that working with products of $m+1$ gradients in the orthogonally identity \eqref{eq_int_3}, we need to prove a certain  Runge type approximation theorem in the $W^{1,m+1}$--topology for any $m=2,3,\dots$ fixed, as opposed to $L^2$ and $H^1$ approximation results obtained in \cite{DKSU} and \cite{Krup_Uhlmann_2}, respectively.

We shall only prove  Theorem \ref{thm_main_2} as the proof of Theorem \ref{thm_main_2_semilinear} is obtained by inspection of that proof as  the only difference between the orthogonally relations  \eqref{eq_int_3}  and \eqref{eq_int_3_semilinear}  is that \eqref{eq_int_3} contains $\omega\cdot\nabla u_r$ with harmonic functions $u_r$ while 
\eqref{eq_int_3_semilinear} contains  $u_r$  instead, and no new difficulties occur.

Let us finally remark that inverse boundary problems for nonlinear elliptic PDE have been studied extensively in the literature. We refer to  \cite{CFeiz_2020_2}, \cite{CFeiz_2020_1}, \cite{CNV_2019}, \cite{Feizmohammadi_Oksanen},  \cite{Hervas_Sun}, \cite{Isakov_2001}, \cite{IsaNach_1995}, \cite{IsaSyl_94}, \cite{Kang_Nak_02},  \cite{Krup_Uhlmann_non_magnetic}, \cite{IY_2013}, \cite{LLLS}, \cite{Mun_Uhl_2018}, \cite{Shankar_2019}, \cite{Sun_96},  \cite{Sun_2004},   \cite{Sun_2010},  \cite{Sun_Uhlm_97},  and the references given there.  In particular, inverse boundary problems with partial data were studied  for  a certain class of semilinear equations of the form $-\Delta u +V(x,u)=0$ in \cite{Krup_Uhlmann_non_linear_1},   \cite{LLLS_partial} relying on the density result of \cite{DKSU},  for  semilinear equations of the form $-\Delta u+q(x)(\nabla u)^2=0$ in \cite{Krup_Uhlmann_2}, and for nonlinear magnetic Schr\"odinger equations in \cite{Lai_Ting}.

The paper is organized as follows. In Section \ref{sec_density} we establish Theorem \ref{thm_main_2}.  Theorem \ref{thm_main} in proven in Section \ref{sec_Thm_main}.  The proof of Theorem \ref{thm_main_semilinear} occupies Section \ref{sec_Thm_main_semilinear}.  In  Appendix \ref{sec_Thm_main_full_data} we present an alternative simple proof of Theorem \ref{thm_main}  in the full data case.  In Appendix \ref{App_well_posedness} we show the well-posedness of the Dirichlet problem for our quasilinear conductivity equation, in the case of boundary data close to a constant one.

\section{Proof of Theorem \ref{thm_main_2}}
\label{sec_density}
We shall proceed by following the general strategy of \cite{DKSU}.  It suffices to assume that $\Gamma\subset \p \Omega$ is a proper open nonempty subset of $\p \Omega$, and even a small open neighborhood of some boundary point.  

\subsection{Local result}
\label{subsection_local}

Theorem \ref{thm_main_2} will be obtained as a corollary of the following local result.

\begin{prop}
\label{prop_local}
Let $\Omega\subset \R^n$, $n\ge 2$, be a bounded open set with $C^\infty$ boundary, and let  $m=2,3, \dots,$ be fixed. Let $x_0\in \p \Omega$, and let $\tilde \Gamma\subset \p \Omega$ be the complement of an open boundary neighborhood of $x_0$. Then there exists $\delta>0$ such that if we have \eqref{eq_int_3}  for any harmonic functions $u_l\in C^\infty(\overline{\Omega})$ satisfying $u_l|_{\tilde \Gamma}=0$, $l=1,\dots, m+1$, then $f=0$ in $B(x_0, \delta)\cap \Omega$.
\end{prop}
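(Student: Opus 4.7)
The strategy follows the scheme of \cite{DKSU} and \cite{Krup_Uhlmann_2}: one inserts into the orthogonality relation \eqref{eq_int_3} complex geometric optics (CGO) solutions to the Laplace equation that concentrate near $x_0$ and vanish on $\tilde \Gamma$, and then extracts the Fourier transform of a localization of $f$ from the asymptotics as the semiclassical parameter $h\to 0$. Shrinking $\tilde\Gamma$ if necessary, one picks a point $x_0^*\in\R^n\setminus\overline{\Omega}$ close to $x_0$ but outside $\overline{\Omega}$, and introduces the limiting Carleman weight $\varphi(x)=\log|x-x_0^*|$. Its sublevel sets in $\overline{\Omega}$ concentrate near $x_0$, so on $\tilde\Gamma$ one has $\varphi\le \varphi(x_0)-c$ for some $c>0$. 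Standard theory provides, locally near $x_0$, a conjugate $\psi$ with $(\nabla(\varphi+i\psi))^2=0$, and perturbing $\psi$ gives a family of phases $\Phi_l=\varphi+i\psi_l$ satisfying the eikonal equation and having $\omega\cdot\nabla\Phi_l(x_0)\ne 0$.

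\textbf{CGOs with partial data.} For $l=1,\dots,m+1$ I would build harmonic functions $u_l\in C^\infty(\overline\Omega)$ of the form $u_l(x;h)=e^{\Phi_l(x)/h}(a_l(x)+h\,r_l(x;h))$, with $u_l|_{\tilde\Gamma}=0$, where $a_l$ is a WKB amplitude solving a transport equation (nonzero near $x_0$) and the phases $\psi_l$ are tuned so that after multiplying the $m+1$ factors together the resulting oscillation reads $e^{i\xi\cdot(x-x_0)/h}$ for a prescribed $\xi$ ranging in a suitable open cone. The vanishing on $\tilde\Gamma$ is enforced by a Runge-type approximation as in \cite{DKSU}: one first builds the CGOs in a slightly enlarged smooth domain $\tilde\Omega\supset\Omega$ obtained by attaching a cap near $x_0$ so that $x_0$ becomes interior to $\tilde\Omega$; the restriction to $\Omega$ has boundary trace exponentially small on $\tilde\Gamma$, which one cancels by solving a Dirichlet problem. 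The Carleman estimate with boundary terms of Bukhgeim--Uhlmann type controls the correction.

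\textbf{Stationary phase and conclusion.} Since each factor $\nabla u_l$ contributes a leading term $h^{-1}\nabla\Phi_l\, e^{\Phi_l/h}a_l$, inserting the $u_l$ into \eqref{eq_int_3} and multiplying by $h^{m+1}$ yields, in the limit $h\to 0$, an expression of the shape
\begin{equation*}
\int_\Omega f(x)\chi(x)A(x)\,e^{i\xi\cdot(x-x_0)/h}\,dx=0,
\end{equation*}
for an explicit amplitude $A$ that can be arranged to be nonzero in $B(x_0,\delta)\cap\Omega$ by the choice of $a_l$ and the fact that the $\omega\cdot\nabla\Phi_l(x_0)$ do not vanish; here $\chi$ is a cutoff localizing to a small neighborhood of $x_0$. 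This says that the Fourier transform of the compactly supported distribution $f\chi A$ vanishes at each $\xi$ in an open cone produced by varying $\psi_l$; by real-analyticity of this Fourier transform, $f\chi A=0$, and then $f=0$ in $B(x_0,\delta)\cap\Omega$.

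\textbf{Main obstacle.} The essential new difficulty, compared with \cite{DKSU} and \cite{Krup_Uhlmann_2}, is that the identity \eqref{eq_int_3} contains a product of $m+1$ gradients, so H\"older's inequality forces the CGO remainder $r_l$ and its gradient to be controlled in the $L^{m+1}$ norm, uniformly in $h$, rather than only in $L^2$. Establishing a Runge-type approximation theorem in the $W^{1,m+1}$ topology, with the correct dependence on $h$ of the Carleman correction in partial-data setting, is the technical heart of the argument; it combines the Bukhgeim--Uhlmann Carleman estimate with $W^{2,p}$ elliptic regularity for the Dirichlet problem used to zero out the trace on $\tilde\Gamma$.
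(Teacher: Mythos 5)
Your proposal takes a genuinely different route from the paper, and as written it contains a conceptual confusion that leaves the argument incomplete.

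\textbf{What the paper actually does for the local result.} Proposition~\ref{prop_local} is proved with \emph{linear} phases $e^{-\frac{i}{h}x\cdot\zeta}$, $\zeta\in p^{-1}(0)$, not with logarithmic limiting Carleman weights. Setting $u_1=\dots=u_m$, the orthogonality identity reduces to the two-function identity \eqref{eq_6_1}. The partial-data support condition $v|_{\tilde\Gamma}=0$ is enforced not by Carleman estimates but by adding a correction $r$ solving a Dirichlet problem whose datum is $-(e^{-\frac{i}{h}x\cdot\zeta}\chi)|_{\p\Omega}$ with $\chi$ supported where $x_1\le -c$; boundary elliptic regularity and Sobolev embedding yield a $C^1(\overline\Omega)$ bound on $r$ with explicit, polynomially-in-$|\zeta|/h$ growth and the crucial exponential gain $e^{-\frac{c}{h}\mathrm{Im}\,\zeta_1}$. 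A $C^1$ bound automatically dominates every $W^{1,m+1}$ bound, so the ``$L^{m+1}$ control of the remainder'' issue you identify as the technical heart does not arise at all in the local step. Plugging $v(x,\zeta)$ and $v(x,m\eta)$ into \eqref{eq_6_1} and using the surjectivity of $(\zeta,\eta)\mapsto\zeta+\eta$ near the chosen $(\gamma,-\overline\gamma)$ leads to the estimate \eqref{eq_6_17} on $\int_\Omega f e^{-\frac{mi}{h}x\cdot z}dx$, after which the proof concludes exactly as in \cite{DKSU} by a Paley--Wiener/segment-method argument, not by ``stationary phase and real-analyticity of the Fourier transform in an open cone.''

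\textbf{Where your argument has a gap.} You have misplaced the role of the $W^{1,m+1}$ Runge-type approximation: in the paper it is Lemma~\ref{lem_Runge}, used only in the \emph{local-to-global} step of subsection~2.4 (testing against Green functions of an enlarged domain $\Omega_2$ and passing to general harmonic functions on $\Omega_1$), and it has nothing to do with the CGO remainder or any $h$-dependence. Your proposal makes it the centerpiece of the local step and ties it to ``the Carleman correction in the partial-data setting,'' but a Carleman estimate of Bukhgeim--Uhlmann type controls the remainder only in weighted $L^2$ (or at best $H^1$) norms, which is too weak to close the argument when $m+1$ gradients are multiplied together: one needs $L^\infty$-type control on $\nabla r_l$, uniformly in $h$, and you do not indicate how to obtain that from the Carleman machinery. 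There is also a second, unaddressed constraint: with $m+1$ CGOs of the form $e^{\Phi_l/h}(a_l+hr_l)$, the real parts $\varphi_l$ must sum to zero over $\overline\Omega$ for the limiting expression to be a genuine Fourier transform, and with a single logarithmic weight $\varphi=\log|x-x_0^*|$ one cannot split $m+1$ factors into decaying and growing halves as cleanly as in the two-solution KSU setup. The paper sidesteps this by using only two free functions $v(x,\zeta)$ and $v(x,m\eta)$ (taking $u_1=\dots=u_m$) and choosing both $\mathrm{Im}\,\zeta_1,\mathrm{Im}\,\eta_1>0$ with $\Omega\subset\{|x+e_1|<1\}$, so the combined exponential is uniformly bounded. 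As it stands, your sketch does not supply the $C^1$/$W^{1,\infty}$ remainder bounds it needs, nor does it explain why the $m+1$ real parts can be made to cancel, so the reduction to a vanishing Fourier transform is not justified.
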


\begin{proof}
It suffices to choose $u_1=\dots=u_{m}$ in \eqref{eq_int_3}. Hence,     \eqref{eq_int_3} implies that 
\begin{equation}
\label{eq_6_1}
\int_{\Omega} f(\omega\cdot \nabla v_1)^{m-1}\nabla v_1\cdot \nabla v_2 dx=0,   
\end{equation}
for all harmonic functions $v_1, v_2\in C^\infty(\overline{\Omega})$ satisfying $v_l|_{\tilde \Gamma}=0$, $l=1,2$.  Our goal is to show that  \eqref{eq_6_1} gives that $f=0$ in $B(x_0, \delta)\cap \Omega$ with $\delta>0$. Arguing as in \cite{DKSU}, we are reduced to the following setting: $x_0=0$, the tangent plane to $\Omega$ at $x_0$ is given by $x_1=0$,
\[
\Omega\subset \{ x\in \R^n: |x+e_1|<1\}, \quad \tilde \Gamma =\{x\in \p \Omega: x_1\le -2 c\}, \quad e_1=(1,0,\dots, 0),
\]
for some $c>0$. 

Let $p(\zeta)=\zeta^2$, $\zeta\in \C^n$, be the principal symbol of $-\Delta$, holomorphically extended to $\C^n$. Let $\zeta\in p^{-1}(0)$ and let $\chi \in C_0^\infty(\R^n)$ be such that $\chi=1$ on $\tilde \Gamma$. We shall work with harmonic functions of the form
\begin{equation}
\label{eq_6_2}
v(x,\zeta)=e^{-\frac{i}{h} x\cdot\zeta} +r(x,\zeta),
\end{equation}
where $r$ is the solution to the Dirichlet problem,
\[
\begin{cases} -\Delta r =0 \quad \text{in}\quad \Omega,\\
r|_{\p \Omega}=-(e^{-\frac{i}{h} x\cdot\zeta} \chi)|_{\p \Omega}.
\end{cases}
\]
By the boundary elliptic regularity, we have $v\in C^\infty(\overline{\Omega})$,  and furthermore $v|_{\tilde \Gamma}=0$. Since in view of \eqref{eq_6_1} we shall work with products of $m+1$ gradients of harmonic functions, we need to have  good estimates for the remainder $r$ in $C^1(\overline{\Omega})$. To that end, in view of Sobolev's embedding, we would like to bound $\|r\|_{H^{k}(\Omega)}$ with $k\in \N$,   $k>n/2+1$. Boundary elliptic regularity gives that for $k\ge 2$, 
\begin{equation}
\label{eq_6_3}
\|r\|_{H^{k}(\Omega)}\le C\|e^{-\frac{i}{h} x\cdot\zeta} \chi\|_{H^{k-1/2}(\p \Omega)}, 
\end{equation}
see \cite[Section 24.2]{Eskin_book}. Now by interpolation, we get 
\begin{equation}
\label{eq_6_4}
\|e^{-\frac{i}{h} x\cdot\zeta} \chi\|_{H^{k-1/2}(\p \Omega)}\le \|e^{-\frac{i}{h} x\cdot\zeta} \chi\|_{H^{k}(\p \Omega)}^{1/2}\|e^{-\frac{i}{h} x\cdot\zeta} \chi\|_{H^{k-1}(\p \Omega)}^{1/2},
\end{equation}
see \cite[Theorem 7.22, p. 189]{Grubb_book}. We have 
\[
\|e^{-\frac{i}{h} x\cdot\zeta} \chi \|_{L^2(\p \Omega)}\le Ce^{\frac{1}{h}\sup_{x\in K}x\cdot \text{Im}\, \zeta },
\]
where $K=\supp\chi\cap \p \Omega$, and therefore, 
\begin{equation}
\label{eq_6_5}
\|e^{-\frac{i}{h} x\cdot\zeta} \chi\|_{H^{k}(\p \Omega)}\le C\bigg(1+\frac{|\zeta|}{h}+\cdots+\frac{|\zeta|^k}{h^k}\bigg)e^{\frac{1}{h}\sup_{x\in K}x\cdot \text{Im}\,\zeta }.
\end{equation}
It follows from \eqref{eq_6_4} and \eqref{eq_6_5} that 
\begin{equation}
\label{eq_6_6}
\|e^{-\frac{i}{h} x\cdot\zeta} \chi\|_{H^{k-1/2}(\p \Omega)}\le C \bigg(1+\frac{|\zeta|^k}{h^k}\bigg)e^{\frac{1}{h}\sup_{x\in K}x\cdot \text{Im}\,\zeta }.
\end{equation}
Using \eqref{eq_6_3} and \eqref{eq_6_6}, we see that 
\[
\|r\|_{H^{k}(\Omega)}\le C \bigg(1+\frac{|\zeta|^k}{h^k}\bigg)e^{\frac{1}{h}\sup_{x\in K}x\cdot \text{Im}\, \zeta }.
\]
Taking $k>n/2+1$ and using the Sobolev embedding $H^k(\Omega)\subset C^1(\overline{\Omega})$, we get 
\begin{equation}
\label{eq_6_7}
\|r\|_{C^1(\overline{\Omega})}\le C \bigg(1+\frac{|\zeta|^k}{h^k}\bigg)e^{\frac{1}{h}\sup_{x\in K}x\cdot \text{Im}\,\zeta }.
\end{equation}

Choosing $\chi \in C_0^\infty(\R^n)$ so that $\supp(\chi)\subset \{x\in \R^n: x_1\le -c\}$ and $\chi=1$ on $\{x\in \p \Omega: x_1\le -2c\}$, we obtain from \eqref{eq_6_7} that
\begin{equation}
\label{eq_6_8}
\|r\|_{C^1(\overline{\Omega})}\le C \bigg(1+\frac{|\zeta|^k}{h^k}\bigg)  e^{-\frac{c}{h}\text{Im}\, \zeta_1} e^{\frac{1}{h}|\text{Im}\, \zeta'|},
\end{equation}
when $\text{Im}\,\zeta_1\ge 0$.

Now the identity \eqref{eq_6_1} implies that 
\begin{equation}
\label{eq_6_9}
\int_\Omega f(x) (\omega\cdot hDv(x,\zeta))^{m-1} hDv(x,\zeta)\cdot hD v(x,m\eta)dx=0,
\end{equation}
for all $\zeta,\eta\in p^{-1}(0)$. Here $v(x,\zeta)$ and $v(x,m\eta)$ are harmonic functions of the form \eqref{eq_6_2} and $D=i^{-1}\nabla$. 
Using that 
\begin{align*}
&(\omega\cdot hDv(x,\zeta))^{m-1} =(-\omega\cdot \zeta e^{-\frac{i}{h}x\cdot \zeta}+\omega\cdot hDr(x,\zeta))^{m-1}\\
&=
(-\omega\cdot \zeta)^{m-1}e^{-\frac{(m-1)i}{h}x\cdot\zeta}+\sum_{l=1}^{m-1}\begin{pmatrix} m-1\\l \end{pmatrix} (\omega\cdot hDr(x,\zeta))^l(-\omega\cdot \zeta e^{-\frac{i}{h}x\cdot\zeta})^{m-1-l},
\end{align*}
we obtain from \eqref{eq_6_9} that 
\begin{equation}
\label{eq_6_10}
\int_\Omega f(x) (-\omega\cdot \zeta)^{m-1} m (\zeta\cdot \eta)e^{-\frac{mi}{h}x\cdot(\zeta+\eta)}dx=I_1+I_2,
\end{equation}
where 
\begin{align*}
I_1=-\int_\Omega f(x) &(-\omega\cdot \zeta)^{m-1} e^{-\frac{(m-1)i}{h}x\cdot\zeta}\big(-\zeta e^{-\frac{i}{h}x\cdot \zeta}\cdot hDr(x,m\eta) \\
&-m\eta e^{-\frac{mi}{h}x\cdot \eta}\cdot hDr(x,\zeta)
+hDr(x,\zeta)\cdot hDr(x,m\eta)\big)dx,
\end{align*}
\begin{align*}
I_2=&-\int_\Omega f(x) \sum_{l=1}^{m-1}\begin{pmatrix} m-1\\l \end{pmatrix} (\omega\cdot hDr(x,\zeta))^l(-\omega\cdot \zeta e^{-\frac{i}{h}x\cdot\zeta})^{m-1-l}\\
&\big(m\zeta\cdot \eta e^{-\frac{i}{h}x\cdot(\zeta+m\eta)}
 -\zeta e^{-\frac{i}{h}x\cdot \zeta}\cdot hDr(x,m\eta) -m\eta e^{-\frac{mi}{h}x\cdot \eta}\cdot hDr(x,\zeta)\\
&+hDr(x,\zeta)\cdot hDr(x,m\eta)\big)dx.
\end{align*}
We shall next proceed to bound the absolute values of $I_1$ and $I_2$. To that end, first note that 
when $\text{Im}\,\zeta_1\ge 0$, using the fact that $\Omega\subset \{x\in \R^n: |x +e_1|<1\}$, we have
\begin{equation}
\label{eq_6_11}
 \|e^{-\alpha\frac{ix\cdot \zeta}{h}}\|_{L^\infty(\Omega)}\le e^{\alpha \frac{|\text{Im}\, \zeta' |}{h}}, \quad \alpha> 0.
\end{equation}
Using \eqref{eq_6_8} and  \eqref{eq_6_11}, we obtain that for all $\zeta,\eta\in p^{-1}(0)$, $\text{Im}\,\zeta_1\ge 0$, $\text{Im}\, \eta_1\ge 0$, 
\begin{equation}
\label{eq_6_12}
\begin{aligned}
|I_1|&\le C\|f\|_{L^\infty} e^{\frac{m(|\text{Im}\, \zeta' |+ |\text{Im}\, \eta'|)}{h}} e^{-\frac{c}{h}\min(\text{Im}\, \zeta_1, \text{Im}\, \eta_1)} |\zeta|^{m-1}\\
&\bigg( |\zeta| h\bigg(1+\frac{|m\eta|^k}{h^k}\bigg)+m|\eta|h\bigg(1+\frac{|\zeta|^k}{h^k}\bigg)+h^2\bigg(1+\frac{|m\eta|^k}{h^k}\bigg)
\bigg(1+\frac{|\zeta|^k}{h^k}\bigg) \bigg),
\end{aligned}
\end{equation}
and 
\begin{equation}
\label{eq_6_13}
\begin{aligned}
|I_2|\le C\|f\|_{L^\infty}& e^{\frac{m(|\text{Im}\, \zeta' |+ |\text{Im}\, \eta'|)}{h}} e^{-\frac{c}{h}\min(\text{Im}\, \zeta_1, \text{Im}\, \eta_1)} h \bigg(1+\frac{|\zeta|^k}{h^k}\bigg)^{m-1} (1+|\zeta|^{m-2})\\
&\bigg(m|\zeta| |\eta|+ |\zeta| h\bigg(1+\frac{|m\eta|^k}{h^k}\bigg)+m|\eta|h\bigg(1+\frac{|\zeta|^k}{h^k}\bigg)\\
&+h^2\bigg(1+\frac{|m\eta|^k}{h^k}\bigg)
\bigg(1+\frac{|\zeta|^k}{h^k}\bigg) \bigg).
\end{aligned}
\end{equation}

As noticed in \cite{DKSU}, the differential of the map
\[
s:p^{-1}(0)\times p^{-1}(0)\to \C^n, \quad (\zeta,\eta)\mapsto \zeta+\eta.
\]
at a point $(\zeta_0,\eta_0)$ is surjective, provided that $\zeta_0$ and $\eta_0$ are linearly independent. The latter holds if $\zeta_0=\gamma$ and $\eta_0=-\overline{\gamma}$ with $\gamma\in \C^n$ given as follows. Recall that $\omega=(\omega_1,\dots, \omega_n)\in \mathbb{S}^{n-1}$ is fixed. Then there exists $\omega_k\ne 0$, and if $2\le k\le n $ we set $\gamma=(i, 0, \dots, 0, 1, 0, \dots, 0)$ where $1$ is on the $k$th position. 
 If $\omega_1\ne 0$ then we set $\gamma=(i,1, 0, \dots, 0)\in \C^n$.

Note that $\gamma\cdot \omega\ne 0$ and $\zeta_0+\eta_0=2i e_1$.  An application of  the inverse function theorem gives  that there exists $\varepsilon>0$ small such that any $z\in \C^n$, $|z-2ie_1|<2\varepsilon$, may be decomposed as $z=\zeta+\eta$ where $\zeta, \eta\in p^{-1}(0)$, $|\zeta-\gamma|<C_1\varepsilon$ and $|\eta+\overline{\gamma}|<C_1\varepsilon$ with some $C_1>0$. We obtain that any $z\in \C^n$ such that  $|z-2i ae_1|<2\varepsilon a$ for some $a>0$, may be decomposed as
\begin{equation}
\label{eq_6_14}
z=\zeta+\eta, \quad \zeta, \eta\in p^{-1}(0),\quad  |\zeta-a \gamma|<C_1a \varepsilon, \quad |\eta+a\overline{\gamma}|<C_1a\varepsilon.
\end{equation}
It follows from \eqref{eq_6_14} that 
\begin{equation}
\label{eq_6_15}
|\text{Im}\,\zeta'|<C_1a\varepsilon, \quad |\text{Im}\,\eta'|<C_1a\varepsilon, \quad |\zeta|\le Ca,\quad |\eta|\le Ca.
\end{equation}
We also conclude from \eqref{eq_6_14}  that for  $\varepsilon>0$ is small enough, 
\begin{equation}
\label{eq_6_16}
\text{Im}\, \zeta_1>a/2, \quad \text{Im}\, \eta_1>a/2, \quad |\zeta\cdot\eta|\ge a^2, \quad |\omega\cdot \zeta|>\frac{a}{2}\sqrt{\omega_1^2+\omega_k^2}. 
\end{equation}
Hence, assuming that $a>1$, we obtain from \eqref{eq_6_10} with the help of \eqref{eq_6_12},  \eqref{eq_6_13},  \eqref{eq_6_14},  \eqref{eq_6_15}, \eqref{eq_6_16} that 
\begin{equation}
\label{eq_6_17}
\begin{aligned}
\bigg|\int_\Omega f(x) e^{-\frac{mi}{h}x\cdot z}dx\bigg|&\le C\|f\|_{L^\infty} e^{-\frac{ca}{2h}}e^{\frac{2mC_1 a\varepsilon}{h}}\bigg(\frac{a}{h}\bigg)^{N}\\
&\le C\|f\|_{L^\infty} e^{-\frac{ca}{4h}}e^{\frac{2mC_1 a\varepsilon}{h}},
\end{aligned}
\end{equation}
for all $z\in \C^n$ such that $|z-2i ae_1|<2\varepsilon a$  and  $\varepsilon>0$ sufficiently small. Here $N$ is a fixed integer which depends on $k$ and $m$.  The estimate \eqref{eq_6_17} is analogous to the bound (3.8) in \cite{DKSU}, and hence, the proof of Proposition \ref{prop_local} may be completed by proceeding as in \cite{DKSU}.
\end{proof}

Next in order to pass from this local result to the global one of Theorem \ref{thm_main_2}, we need  a Runge type approximation theorem in the $W^{1,m+1}$--topology, $m=2,3,\dots$, which will extend \cite[Lemma 2.2]{DKSU} and \cite[Lemma 2.2]{Krup_Uhlmann_2}, where approximations in the $L^2$ and $H^1$ topologies were established, respectively.  To prove such an approximation theorem, we need to recall some facts about $L^p$ based Sobolev spaces which we shall now proceed to do.

\subsection{Some facts about $L^p$ based Sobolev spaces}

Here we recall some definitions and facts regarding $L^p$ based Sobolev spaces following \cite{Brezis_book}, see also \cite{Triebel_book_1978}. 
Let $\Omega\subset \R^n$, $n\ge 2$, be an open set, and let $1<p<\infty$. The Sobolev space $W^{1,p}(\Omega)$ is defined by 
\[
W^{1,p}(\Omega)=\{u\in L^p(\Omega): \p_{x_j} u\in L^p(\Omega), \ 
 j=1,\dots,n\}.
\]
When equipped with the norm
\[
\|u\|_{W^{1,p}(\Omega)}=\|u\|_{L^p(\Omega)}+\sum_{j=1}^n\|\p_{x_j} u\|_{L^p(\Omega)},
\]
the space $W^{1,p}(\Omega)$ becomes a Banach space. We write $H^1(\Omega)=W^{1,2}(\Omega)$. 

The space $W^{1,p}_0(\Omega)$ is defined as the closure of $C^\infty_0(\Omega)$ in $W^{1,p}(\Omega)$. Since $C^\infty_0(\R^n)$ is dense in $W^{1,p}(\R^n)$, we have 
\[
W_0^{1,p}(\R^n)=W^{1,p}(\R^n). 
\]
We denote by $W^{-1,p'}(\Omega)$ the dual space of $W^{1,p}_0(\Omega)$ and write 
\[
(W^{1,p}_0(\Omega))^*=W^{-1,p'}(\Omega),
\]
where  $\frac{1}{p}+\frac{1}{p'}=1$. We have that if $u\in W^{-1,p'}(\Omega)$ then there exist $f_0,f_1,\dots, f_n\in L^{p'}(\Omega)$ such that 
\[
u= f_0+ \sum_{j=1}^n \p_{x_j} f_j, 
\]
and  
\[
\|u\|_{W^{-1,p'}(\Omega)}=\max_{0\le j\le n} \|f_j\|_{L^{p'}(\Omega)},
\]
see  \cite[Proposition 9.20]{Brezis_book}.

From now on let $\Omega\subset \R^n$, $n\ge 2$, be a bounded open set with $C^\infty$ boundary.  We have for the dual space of $W^{1,p}(\Omega)$, 
\[
(W^{1,p}(\Omega))^*=\tilde W^{-1,p'}(\Omega),
\]
where 
\[
 \tilde W^{-1,p'}(\Omega)=\{u\in W^{-1,p'}(\R^n): \supp(u)\subset \overline{\Omega}\},
\]
see  \cite[page 163]{Browder_61},  \cite[Section 4.3.2]{Triebel_book_1978}.  The duality pairing is defined as follows: 
if $v\in \tilde W^{-1,p'}(\Omega)$ and $u\in W^{1,p}(\Omega)$, we set 
\begin{equation}
\label{eq_1_1_0}
(v,u)_{\tilde W^{-1,p'}(\Omega), W^{1,p}(\Omega)}:=(v, \text{Ext}(u))_{W^{-1,p'}(\R^n), W^{1,p}(\R^n)},
\end{equation}
where $\text{Ext}(u)\in W^{1,p}(\R^n)$ is an arbitrary extension of $u$, see  \cite[Theorem 9.7]{Brezis_book} for the existence of such an extension, and  $(\cdot, \cdot)_{W^{-1,p'}(\R^n), W^{1,p}(\R^n)}$ is the extension of $L^2$ scalar product $(\varphi, \psi)_{L^2(\R^n)}=\int_{\R^n} \varphi(x)\overline{\psi(x)}dx$. One can show that the definition \eqref{eq_1_1_0} is independent of the choice of an extension.

We shall also need the following fact, see \cite[Section 4.3.2, p. 318]{Triebel_book_1978}.
\begin{prop}
\label{prop_density_in_tilde_space}
$C^\infty_0(\Omega)$ is dense in $\tilde W^{-1,p'}(\Omega)$ with respect to $W^{-1,p'}(\R^n)$ topology. 
\end{prop}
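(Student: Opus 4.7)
The approach is via duality and Hahn--Banach, based on $(W^{-1,p'}(\R^n))^*=W^{1,p}(\R^n)$, which follows from the reflexivity of $W^{1,p}(\R^n)$ for $1<p<\infty$ and the identity $W^{1,p}_0(\R^n)=W^{1,p}(\R^n)$ recorded in the text. I would denote by $E$ the closure of $C_0^\infty(\Omega)$ in $W^{-1,p'}(\R^n)$; since $\tilde W^{-1,p'}(\Omega)$ is itself closed in $W^{-1,p'}(\R^n)$, one has $E\subset \tilde W^{-1,p'}(\Omega)$. If the inclusion were strict, Hahn--Banach would produce $\phi\in W^{1,p}(\R^n)$ with $\langle u,\phi\rangle_{W^{-1,p'}(\R^n),W^{1,p}(\R^n)}=0$ for every $u\in C_0^\infty(\Omega)$ and $\langle v,\phi\rangle\ne 0$ for some $v\in \tilde W^{-1,p'}(\Omega)$. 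The first condition reduces, via the extension of the $L^2$ pairing, to $\int_\Omega u\,\overline{\phi}\,dx=0$ for all $u\in C_0^\infty(\Omega)$, and hence forces $\phi=0$ almost everywhere on $\Omega$.

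The heart of the argument is then the claim that every $\phi\in W^{1,p}(\R^n)$ vanishing a.e.\ on $\Omega$ can be approximated in $W^{1,p}(\R^n)$ by functions $\phi_k\in C_0^\infty(\R^n\setminus\overline{\Omega})$, extended by zero to $\overline{\Omega}$. Indeed, the interior and exterior traces of $\phi$ on $\partial\Omega$ exist and must agree since $\phi\in W^{1,p}(\R^n)$; as $\phi=0$ in $\Omega$, the interior trace vanishes, and so does the exterior trace. By the classical characterization of $W^{1,p}_0$ via vanishing of the trace on a $C^\infty$ boundary, $\phi|_{\R^n\setminus\overline{\Omega}}\in W^{1,p}_0(\R^n\setminus\overline{\Omega})$, which by definition is the $W^{1,p}(\R^n\setminus\overline{\Omega})$ closure of $C_0^\infty(\R^n\setminus\overline{\Omega})$. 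Extending these approximants by zero to $\overline{\Omega}$ promotes convergence in $W^{1,p}(\R^n\setminus\overline{\Omega})$ to convergence in $W^{1,p}(\R^n)$, because $\phi$ itself is zero on $\overline{\Omega}$ up to a null set.

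Given this approximation, the contradiction is immediate: each $\phi_k$ vanishes on an open neighborhood of $\overline{\Omega}$, which contains $\supp v$, so $\langle v,\phi_k\rangle=0$ by the disjoint-support property of distributions; passing to the limit yields $\langle v,\phi\rangle=0$ for every $v\in\tilde W^{-1,p'}(\Omega)$, contradicting the choice of $\phi$. Hence $E=\tilde W^{-1,p'}(\Omega)$. The main obstacle in this plan is the trace identification $\phi|_{\R^n\setminus\overline{\Omega}}\in W^{1,p}_0(\R^n\setminus\overline{\Omega})$, relying on the matching of interior and exterior traces of a $W^{1,p}(\R^n)$ function on $\partial\Omega$ together with the trace characterization of $W^{1,p}_0$; this is a standard consequence of local $C^\infty$ straightening of $\partial\Omega$ and is the only genuinely technical input. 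An alternative, more constructive route would push $\supp v$ strictly inside $\Omega$ via the flow of a smooth vector field inward-pointing on $\partial\Omega$ and then mollify, but the duality argument above seems the shortest.
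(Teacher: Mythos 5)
The paper gives no proof of this proposition; it simply cites \cite[Section 4.3.2, p.~318]{Triebel_book_1978}, so your argument supplies a proof where the paper defers to a reference. The duality route you take is correct. By reflexivity of $W^{1,p}(\R^n)$ for $1<p<\infty$ and Hahn--Banach, an annihilator of the closure of $C_0^\infty(\Omega)$ corresponds to some $\phi\in W^{1,p}(\R^n)$, and orthogonality to $C_0^\infty(\Omega)$ indeed forces $\phi=0$ a.e.\ in $\Omega$ (hence a.e.\ on $\overline\Omega$, as $\partial\Omega$ is Lebesgue-null). The key step---that such $\phi$ has $\phi|_{\R^n\setminus\overline\Omega}\in W^{1,p}_0(\R^n\setminus\overline\Omega)$---is sound: the one-sided traces of a $W^{1,p}(\R^n)$ function on the compact smooth hypersurface $\partial\Omega$ agree, so the exterior trace vanishes, and vanishing trace characterizes $W^{1,p}_0$. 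One point to make explicit: the paper states Proposition~\ref{prop_char_W_1_p_0} and \eqref{eq_1_1_1} only for the bounded domain $\Omega$, whereas you need the analogous facts for the unbounded exterior $\R^n\setminus\overline\Omega$; these do hold (the boundary is compact and $C^\infty$, and a cutoff at infinity reduces to the local situation), but they should be invoked in that generality rather than read off the bounded case verbatim. Granting this, extending the approximants by zero gives $\phi_k\in C_0^\infty(\R^n\setminus\overline\Omega)$ with $\phi_k\to\phi$ in $W^{1,p}(\R^n)$, each $\phi_k$ vanishes on an open neighborhood of $\overline\Omega\supset\operatorname{supp} v$, and passing to the limit contradicts $\langle v,\phi\rangle\neq0$. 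The alternative you sketch---flowing $\operatorname{supp} v$ strictly into $\Omega$ and mollifying, working directly on the $W^{-1,p'}$ side---is the more classical argument for such density statements and avoids trace theory entirely; your version trades that geometric construction for a trace-theoretic analysis of the annihilator, and both are acceptable.
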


We define the fractional Sobolev space $W^{s,p}(\Omega)$, $0<s<1$, $1<p<\infty$, as follows, 
\[
W^{s,p}(\Omega)=\{u\in L^p(\Omega): \frac{|u(x)-u(y)|}{|x-y|^{s+n/p}}\in L^p(\Omega\times \Omega)\},
\]
equipped with the natural norm. By local charts, we define $W^{s, p}(M)$, $0<s<1$, $1<p<\infty$, when $M$ is a $C^\infty$ compact manifold without boundary. If $u\in W^{1,p}(\Omega)$ then $u|_{\p \Omega}\in W^{1-1/p,p}(\p \Omega)$ and 
\[
\|u|_{\p\Omega}\|_{W^{1-1/p,p}(\p \Omega)}\le C\|u\|_{W^{1,p}(\Omega)},  \text{ for all }u\in W^{1,p}(\Omega).
\]
Furthermore, the trace operator $W^{1,p}(\Omega)\ni u\mapsto u|_{\p \Omega}\in W^{1-1/p,p}(\p \Omega)$ is surjective, 
see \cite[p. 315]{Brezis_book}. We also have 
\begin{equation}
\label{eq_1_1_1}
W^{1,p}_0(\Omega)=\{u\in W^{1,p}(\Omega): u|_{\p \Omega}=0\},
\end{equation}
see \cite[p. 315]{Brezis_book}. We have another characterization of $W^{1,p}_0(\Omega)$, see  \cite[Proposition 9.18]{Brezis_book}. 
\begin{prop}
\label{prop_char_W_1_p_0}
Let $u\in L^p(\Omega)$, $1<p<\infty$. Then $u\in W^{1,p}_0(\Omega)$ if and only if the function 
\[
\tilde u=\begin{cases} u(x), & \text{if}\quad x\in\Omega,\\
0,&  \text{if}\quad x\in \R^n\setminus \Omega,
\end{cases}
\]
belongs to $W^{1,p}(\R^n)$. 
\end{prop}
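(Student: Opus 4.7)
The plan is to prove both implications, using the definition of $W^{1,p}_0(\Omega)$ as the closure of $C^\infty_0(\Omega)$ in $W^{1,p}(\Omega)$ for the forward direction, and the trace characterization \eqref{eq_1_1_1} for the backward one; the $C^\infty$ regularity of $\p \Omega$ enters only through the availability of the trace theorem.

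For the forward direction, given $u \in W^{1,p}_0(\Omega)$, choose $\varphi_k \in C^\infty_0(\Omega)$ with $\varphi_k \to u$ in $W^{1,p}(\Omega)$. The zero extensions $\tilde \varphi_k$ lie in $C^\infty_0(\R^n)$ and satisfy $\|\tilde \varphi_k - \tilde \varphi_l\|_{W^{1,p}(\R^n)} = \|\varphi_k - \varphi_l\|_{W^{1,p}(\Omega)}$, hence form a Cauchy sequence in $W^{1,p}(\R^n)$ whose limit must coincide with $\tilde u$ by $L^p$-convergence. In particular, $\tilde u \in W^{1,p}(\R^n)$ and its distributional partials on $\R^n$ are the zero extensions of the weak partials of $u$ on $\Omega$.

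For the backward direction, suppose $\tilde u \in W^{1,p}(\R^n)$. The one-sided traces of $\tilde u$ from $\Omega$ and from $\R^n \setminus \overline{\Omega}$ must agree on $\p \Omega$, since any discrepancy would yield a singular distributional derivative supported on $\p \Omega$, contradicting $\tilde u \in W^{1,p}(\R^n)$. The exterior trace is $0$ while the interior trace is $u|_{\p \Omega}$, so $u|_{\p \Omega} = 0$ in $W^{1-1/p,p}(\p \Omega)$, and \eqref{eq_1_1_1} then delivers $u \in W^{1,p}_0(\Omega)$.

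The main obstacle is justifying the trace agreement rigorously. I would approximate $\tilde u$ in $W^{1,p}(\R^n)$ by $\psi_k \in C^\infty_0(\R^n)$ (using that $W^{1,p}_0(\R^n) = W^{1,p}(\R^n)$), observe that for each $\psi_k$ the pointwise restriction $\psi_k|_{\p \Omega}$ agrees simultaneously with the trace of $\psi_k|_\Omega$ and of $\psi_k|_{\R^n \setminus \overline{\Omega}}$, and then pass to the limit using continuity of the two trace operators $W^{1,p}(\Omega) \to W^{1-1/p,p}(\p \Omega)$ and $W^{1,p}(\R^n \setminus \overline{\Omega}) \to W^{1-1/p,p}(\p \Omega)$. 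A constructive alternative that bypasses \eqref{eq_1_1_1} altogether is to flow $\tilde u$ a small distance $t > 0$ along a smooth inward transverse vector field defined on a tubular neighborhood of $\p \Omega$, use $L^p$-continuity of translations to see that the flowed function converges to $\tilde u$ in $W^{1,p}(\R^n)$, and then mollify at scale $\ll t$ to produce an approximating sequence in $C^\infty_0(\Omega)$ converging to $u$ in $W^{1,p}(\Omega)$.
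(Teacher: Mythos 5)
The paper does not give its own proof of this proposition: it is stated as a standing fact with the citation \cite[Proposition 9.18]{Brezis_book}, so there is no internal argument to check your proposal against. Your proof is correct. The forward direction (zero-extend a $C^\infty_0(\Omega)$-approximating sequence and pass to the limit in $W^{1,p}(\R^n)$) is exactly what any proof does. For the backward direction your primary route leans on the trace characterization \eqref{eq_1_1_1}, which the paper also cites from Brezis; your approximation argument---restrict $\psi_k\in C^\infty_0(\R^n)$ with $\psi_k\to\tilde u$ to the two sides of $\p\Omega$, note that the pointwise restriction $\psi_k|_{\p\Omega}$ is both traces, and pass to the limit through the two continuous trace maps---is rigorous, with the minor observation that the trace theorem applies on $\R^n\setminus\overline\Omega$ because $\p\Omega$ is compact and the trace estimate is local. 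Your constructive alternative (flow inward a distance $t$ along a transverse field, then mollify at scale $\ll t$) is in fact closer in spirit to Brezis's own proof, which uses translation plus cutoff plus mollification; it has the merit of bypassing \eqref{eq_1_1_1} entirely and hence avoids even the appearance of circularity between the two characterizations of $W^{1,p}_0(\Omega)$. Either route is acceptable; the paper simply chose to outsource the result.
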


We have the following result concerning the solvability of the Dirichlet problem for the Laplacian, see \cite[Theorem 7.10.2, p. 494]{Medkova_book}. 
\begin{thm}
\label{thm_solvability_Dirichlet}
Let $v\in W^{-1,p}(\Omega)$ and $g\in W^{1-1/p, p}(\p \Omega)$ with $1<p<\infty$. Then the Dirichlet problem
\[
\begin{cases}
-\Delta u=v & \text{in}\quad \Omega,\\
u|_{\p \Omega}=g, 
\end{cases}
\]
has a unique solution $u\in W^{1,p}(\Omega)$. Moreover, 
\[
\|u\|_{W^{1,p}(\Omega)}\le C(\|v\|_{W^{-1,p}(\Omega)}+\|g\|_{W^{1-1/p, p}(\p \Omega)}).
\]

\end{thm}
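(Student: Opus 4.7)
The strategy is to reduce the problem to one with zero boundary data, and then combine the $L^2$ Hilbert-space theory with $L^p$ elliptic regularity to handle the general exponent. Throughout I would exploit that $\p \Omega$ is $C^\infty$.

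First I would lift the boundary data using the surjectivity of the trace operator $W^{1,p}(\Omega)\to W^{1-1/p,p}(\p \Omega)$: pick $G\in W^{1,p}(\Omega)$ with $G|_{\p \Omega}=g$ and $\|G\|_{W^{1,p}(\Omega)}\le C\|g\|_{W^{1-1/p,p}(\p \Omega)}$. Writing $u=G+w$, the task becomes finding $w\in W^{1,p}_0(\Omega)$ (which is equivalent to $w\in W^{1,p}(\Omega)$ with vanishing trace, by \eqref{eq_1_1_1}) that solves $-\Delta w=\t v$ in $\Omega$, where $\t v:=v+\Delta G\in W^{-1,p}(\Omega)$ satisfies $\|\t v\|_{W^{-1,p}(\Omega)}\le C(\|v\|_{W^{-1,p}(\Omega)}+\|g\|_{W^{1-1/p,p}(\p \Omega)})$. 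This eliminates $g$ from the discussion.

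For the reduced problem at $p=2$, the bilinear form $a(w,\varphi)=\int_\Omega \nabla w\cdot\overline{\nabla \varphi}\,dx$ is bounded and coercive on $H^1_0(\Omega)$ by Poincar\'e's inequality, so the Lax--Milgram theorem produces a unique $w\in H^1_0(\Omega)$ with $\|w\|_{H^1(\Omega)}\le C\|\t v\|_{H^{-1}(\Omega)}$. To extend this to general $1<p<\infty$, I would invoke $L^p$ elliptic regularity: write $\t v=f_0+\sum_{j=1}^n \p_{x_j}f_j$ with $f_j\in L^p(\Omega)$, handle each summand separately, and use Calder\'on--Zygmund estimates for the Newtonian potential to obtain interior $W^{1,p}$ bounds. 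Near $\p \Omega$, a partition of unity together with local flattening via $C^\infty$ boundary charts reduces the estimate to a known $W^{1,p}$ bound on a half-space (solvable via reflection and the explicit half-space Green's function). Patching these contributions yields $w\in W^{1,p}_0(\Omega)$ with $\|w\|_{W^{1,p}(\Omega)}\le C\|\t v\|_{W^{-1,p}(\Omega)}$.

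Uniqueness I would obtain by a duality argument. Any two $W^{1,p}$ solutions differ by a weakly harmonic $u_0\in W^{1,p}_0(\Omega)$. Given $\varphi\in C^\infty_0(\Omega)$, apply the already-proved existence at the conjugate exponent $p'$ to produce $\psi\in W^{1,p'}_0(\Omega)$ with $-\Delta \psi=\varphi$; integration by parts gives $\int_\Omega u_0 \overline{\varphi}\,dx=0$, and density of $C^\infty_0(\Omega)$ in $L^{p'}(\Omega)$ forces $u_0=0$. The main technical obstacle is the boundary $W^{1,p}$ estimate for $p\ne 2$, which relies on Calder\'on--Zygmund singular-integral bounds for kernels obtained after flattening $\p \Omega$; smoothness of the boundary is what makes this reduction routine. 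Since the result is standard and quoted from \cite{Medkova_book}, I would refer there for the detailed verification of that step.
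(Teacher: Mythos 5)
The paper gives no proof of this theorem; it quotes it directly from \cite[Theorem 7.10.2]{Medkova_book}. Your outline is a correct sketch of the standard argument and is consistent with what one would find in that reference: lift $g$ by a bounded right inverse of the trace to reduce to homogeneous boundary data; solve $-\Delta w=\tilde v\in W^{-1,p}(\Omega)$ using the representation $\tilde v=f_0+\sum_j\p_{x_j}f_j$ together with Calder\'on--Zygmund estimates, a partition of unity, and local boundary flattening; and prove uniqueness by testing a weakly harmonic $u_0\in W^{1,p}_0(\Omega)$ against solutions of the dual problem (the computation $\int_\Omega u_0\overline\varphi\,dx=\int_\Omega\nabla u_0\cdot\overline{\nabla\psi}\,dx=0$ for $-\Delta\psi=\varphi$, $\psi\in W^{1,p'}_0(\Omega)$, is clean and correct). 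Two small remarks on the existence step. First, for $p<2$ one has $W^{-1,p}(\Omega)\supsetneq H^{-1}(\Omega)$, so the Lax--Milgram solution at $p=2$ cannot simply be reused: existence must come either from the potential-theoretic construction you go on to describe, or from approximating $\tilde v$ by $H^{-1}(\Omega)$ data and passing to the limit via the a priori $W^{1,p}$ bound; for $p>2$ one instead solves in $H^1_0(\Omega)$ and upgrades the regularity. Second, as you rightly flag, the real technical content is the $W^{1,p}$ boundary estimate after flattening, and deferring that step to \cite{Medkova_book} is entirely appropriate here, since the paper itself treats the whole theorem as standard and does not reprove it.
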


We shall also need the following result about the structure of distributions in $W^{-1,p}(\R^n)$ supported by a smooth hypersurface in $\R^n$. We refer to \cite[Theorem 5.1.13]{Agranovich_book},  \cite[Lemma 3.39]{McLean_book} for this result in the case of distributions in $H^{-1}(\R^n)$. Since we did not find a reference for the case of distributions in $W^{-1,p}(\R^n)$ with $1<p<\infty$, we shall present the proof of this result here. 

\begin{prop}
\label{prop_distributions_support_hyperplane}
Let $F$ be  a smooth compact hypersurface in $\R^n$. Let $u\in W^{-1,p}(\R^n)$, with some $1<p<\infty$, be such that $\supp(u)\subset F$. Then 
\[
u=v \otimes\delta_F, \quad v\in (W^{1-1/p',p'}(F))^*= B_{p,p}^{-(1-1/p')}(F).
\]
Here $\frac{1}{p}+\frac{1}{p'}=1$ and $B_{p,p}^{-(1-1/p')}(F)$ is the Besov space on the manifold $F$, see \cite[Section 2.3.1, p. 169]{Triebel_book_1978}, \cite{Triebel_1986}  for the definition. 

\end{prop}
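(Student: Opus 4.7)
The plan is to reduce to the model case of a hyperplane via a partition of unity and local charts, identify a distribution supported in the hyperplane as a pullback via the trace operator, and finally recognize its dual Besov space. Since $F$ is a smooth compact hypersurface, a finite partition of unity subordinate to tubular coordinate patches, together with local diffeomorphisms, reduces matters to the following model problem: given $u\in W^{-1,p}(\R^n)$ with $\mathrm{supp}(u)\subset\{x_n=0\}$, show that $u = v\otimes \delta(x_n)$ for some $v\in (W^{1-1/p',p'}(\R^{n-1}))^*$, where primes denote the first $n-1$ coordinates. Once the model case is handled, the Besov identification $W^{1-1/p',p'} = B^{1-1/p'}_{p',p'}$ combined with standard duality for Besov spaces (see \cite{Triebel_book_1978}) gives the target space $B^{-(1-1/p')}_{p,p}(F)$.

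The central step is to prove that for any $\varphi\in W^{1,p'}(\R^n)$ with vanishing trace on $H:=\{x_n=0\}$, one has $\langle u,\varphi\rangle = 0$. Indeed, by the characterization \eqref{eq_1_1_1} applied to each half-space $\{\pm x_n>0\}$, the restrictions $\varphi|_{\pm x_n>0}$ belong to $W^{1,p'}_0(\{\pm x_n>0\})$, hence can be approximated in $W^{1,p'}$ of each half-space by functions in $C^\infty_0(\{\pm x_n>0\})$. By Proposition \ref{prop_char_W_1_p_0}, the zero extensions of these approximants lie in $W^{1,p'}(\R^n)$, and glueing the two sides together produces a sequence in $C^\infty_0(\R^n\setminus H)$ converging to $\varphi$ in $W^{1,p'}(\R^n)$. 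Since $u$ is a distribution supported in $H$, each such approximant is annihilated by $u$, and the continuity of $u\in W^{-1,p}(\R^n)=(W^{1,p'}(\R^n))^*$ yields $\langle u,\varphi\rangle=0$.

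Knowing this, the functional $\varphi\mapsto\langle u,\varphi\rangle$ descends through the trace map $T:W^{1,p'}(\R^n)\to W^{1-1/p',p'}(H)$. Because $T$ admits a bounded linear right inverse (the standard extension operator, cf.\ \cite[Theorem 9.7]{Brezis_book} and the surjectivity of the trace), we may define
\[
v(g):=\langle u, Eg\rangle,\qquad g\in W^{1-1/p',p'}(H),
\]
for any such extension $E$; well-definedness is given by the previous step, and continuity follows from $|v(g)|\le \|u\|_{W^{-1,p}(\R^n)}\|E\|\,\|g\|_{W^{1-1/p',p'}(H)}$. Thus $v\in (W^{1-1/p',p'}(H))^* = B^{-(1-1/p')}_{p,p}(H)$. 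To confirm the identification $u=v\otimes\delta_H$, note that for $\varphi\in C^\infty_0(\R^n)$ the extension of $T\varphi$ may be taken to be $\varphi$ itself, so $\langle u,\varphi\rangle = v(\varphi|_H) = \langle v\otimes\delta_H,\varphi\rangle$. Finally, transporting back through the partition of unity and the local charts globalizes the conclusion. The main obstacle is the approximation argument in the second paragraph, where one must carefully justify that $W^{1,p'}$ functions vanishing on the hyperplane can be approximated in $W^{1,p'}(\R^n)$ by smooth functions supported strictly off the hyperplane; this is the $L^p$ analogue of the $H^1$ statement used in the Hilbert setting of \cite{Agranovich_book, McLean_book}, and rests crucially on the characterization of $W^{1,p'}_0$ via zero extension.
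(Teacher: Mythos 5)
Your proposal is correct and follows essentially the same route as the paper's proof, which localizes to the flat model $\{x_n=0\}$, proves that $u$ annihilates $W^{1,p'}$ functions with vanishing trace by splitting into half-spaces and using the $W^{1,p'}_0$ characterization, and then factors $u$ through the trace operator via a fixed extension. One small improvement in your write-up: you state the central vanishing claim directly for all $\varphi\in W^{1,p'}(\R^n)$ with vanishing trace (not just $C^\infty_0$ test functions), which is exactly the form needed when applying it to $\varphi-E(\varphi|_H)$ in the final identification, whereas the paper states it for $C^\infty_0$ and tacitly uses the extension.
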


\begin{proof}
Introducing a partition of unity and making a smooth change of variables, we see that it suffices to establish the following local result: let $u\in W^{-1, p}(\R^n)$, $1<p<\infty$, such that $\supp(u)\subset \{x_n=0\}$, then $u=v\otimes \delta_{x_n=0}$ with $v\in (W^{1-1/p',p'}(\R^{n-1}))^*= B_{p,p}^{-(1-1/p')}(\R^{n-1})$. In order to prove this result we follow  \cite[Lemma 3.39]{McLean_book}. 

First we claim that if $\varphi\in C^\infty_0(\R^n)$ is such that $\varphi|_{x_n=0}=0$ then $u(\varphi)=0$.  To that end, we let 
\[
\varphi_{\pm}(x)=\begin{cases} \varphi(x), & \text{if}\quad x\in \R^n_{\pm}=\{x\in \R^n: \pm x_n>0\},\\
0, & \text{otherwise}.
\end{cases}
\]
Then $\varphi_{\pm}\in W^{1,p'}(\R^n)$ and therefore, by Proposition \ref{prop_char_W_1_p_0} $\varphi_{\pm}\in W^{1,p'}_0(\R^n_{\pm})$.  Thus, there exist sequences $\varphi_{j,\pm}\in C^\infty_0(\R^n_{\pm})$ such that $\varphi_{j,\pm}\to \varphi_{\pm}$ in $W^{1,p'}(\R^n_{\pm})$ as $j\to\infty$.  Letting 
\[
\chi_j(x)=\begin{cases} \varphi_{j,+}(x), & \text{if}\quad x\in \R^n_+,\\
\varphi_{j,-}(x), & \text{if}\quad x\in \R^n_-,
\end{cases}
\]
we see that $\chi_j\in C^\infty_0(\R^n)$, $\chi_j=0$ near $\{x_n=0\}$, and $\chi_j\to\varphi$ in $W^{1,p'}(\R^n)$.  Hence, we have
$0=u(\chi_j)\to u(\varphi)$, and therefore, $u(\varphi)=0$, establishing the claim. 

To proceed we need the following result, see \cite{Mironescu_2015}, \cite[Theorem 1.5.1.1, p. 37]{Grisvard_book}.  The trace operator $u\mapsto u|_{x_n=0}$, which is defined on $C_0^\infty(\R^n)$, has a unique continuous extension as an operator,
\[
\gamma: W^{1,p'}(\R^n)\to W^{1-1/p',p'}(\R^{n-1}), \quad 1<p'<\infty.
\]
This operator has a right continuous inverse, the extension operator, 
\[
E: W^{1-1/p',p'}(\R^{n-1})\to W^{1,p'}(\R^n)
\]
so that $\gamma(E\psi)=\psi$ for all $\psi\in W^{1-1/p',p'}(\R^{n-1})$.  

Now we define 
\begin{equation}
\label{eq_def_v}
v(\varphi)=u(E\varphi), \quad \varphi\in C^\infty_0(\R^{n-1}). 
\end{equation}
We have
\[
|v(\varphi)|\le \|u\|_{W^{-1,p}(\R^n)}\|E\varphi\|_{W^{1,p'}(\R^n)}\le C\|u\|_{W^{-1,p}(\R^n)}\|\varphi\|_{W^{1-1/p',p'}(\R^{n-1})},
\]
and therefore, $v\in (W^{1-1/p',p'}(\R^{n-1}))^*$. Note that when $1<p'<\infty$, 
\[
W^{1-1/p',p'}(\R^{n-1})=B^{1-1/p'}_{p',p'}(\R^{n-1}), \quad (B^{1-1/p'}_{p',p'}(\R^{n-1}))^*=B^{-(1-1/p')}_{p,p}(\R^{n-1}),  
\]
see \cite[Section 2.5, p. 190, and Section 2.6.1, p. 198]{Triebel_book_1978}.

Finally, we claim that $u-v\otimes\delta_{x_n=0}=0$. Indeed, letting $\varphi\in C^\infty_0(\R^n)$ and using \eqref{eq_def_v} and our first claim, we get 
\[
(u-v\otimes\delta_{x_n=0})(\varphi)=u(\varphi)-v(\varphi|_{x_n=0})=u(\varphi-E(\varphi|_{x_n=0}))=0.
\]
This completes the proof of Proposition \ref{prop_distributions_support_hyperplane}. 
\end{proof}

\subsection{Runge type approximation}

\label{subsection_density_gradients} 

Let $\Omega_1\subset \Omega_2\subset \R^n$, $n\ge 2$,  be two bounded open sets with smooth boundaries such that
$\Omega_2\setminus\overline{\Omega_1}\ne\emptyset$. Suppose that $\p \Omega_1\cap \p \Omega_2=\overline{U}$ where $U\subset \p \Omega_1$ is open with $C^\infty$ boundary. Let $\mathcal{G}: C^\infty(\overline{\Omega_2})\to C^\infty(\overline{\Omega_2})$, $a\mapsto w$, be the solution operator to the Dirichlet problem,
\[
\begin{cases} -\Delta w=a & \text{in}\quad \Omega_2,\\
w|_{\p \Omega_2}=0.
\end{cases}
\]

The following result is an extension of \cite[Lemma 2.2]{DKSU} and \cite[Lemma 2.2]{Krup_Uhlmann_2}, where the similar density results were obtained in the $L^2$ and $H^1$ topologies, respectively.  
\begin{lem}
\label{lem_Runge}
 The space
\[
W:=\{\mathcal{G} a|_{\Omega_1}: a\in C^\infty(\overline{\Omega_2}),\ \emph{\supp}(a)\subset \Omega_2\setminus\overline{\Omega_1}\}
\]
is dense in the space
\[
S:=\{u\in C^\infty(\overline{\Omega_1}): -\Delta u=0 \text{ in }\Omega_1, \ u|_{\p \Omega_1\cap \p \Omega_2}=0\},
\]
with respect to  the $W^{1,p}(\Omega_1)$--topology, for any $1<p<\infty$.
\end{lem}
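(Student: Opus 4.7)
The plan is a standard duality argument: to prove that $W$ is dense in $S$ in the $W^{1,p}(\Omega_1)$-topology, it suffices by the Hahn--Banach theorem to show that every $v\in (W^{1,p}(\Omega_1))^{*}$ which annihilates $W$ must annihilate $S$. By the duality identification $(W^{1,p}(\Omega_1))^{*}=\tilde W^{-1,p'}(\Omega_1)$ recalled above, such a functional is represented by a distribution $v\in W^{-1,p'}(\R^n)$ with $\supp(v)\subset\overline{\Omega_1}$, where $1/p+1/p'=1$.

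\textbf{Auxiliary Dirichlet problem on $\Omega_2$.} Since test functions in $W^{1,p}_0(\Omega_2)$ extend by zero to $W^{1,p}(\R^n)$ by Proposition~\ref{prop_char_W_1_p_0}, the distribution $v$ restricts to an element $v|_{\Omega_2}\in W^{-1,p'}(\Omega_2)$. Applying Theorem~\ref{thm_solvability_Dirichlet} I produce a unique $\varphi\in W^{1,p'}_0(\Omega_2)$ solving $-\Delta\varphi=v|_{\Omega_2}$ in $\Omega_2$. For any $a\in C^\infty(\overline{\Omega_2})$ supported in $\Omega_2\setminus\overline{\Omega_1}$ the function $G:=\mathcal{G}a$ lies in $C^\infty(\overline{\Omega_2})\cap W^{1,p}_0(\Omega_2)$, and because $v$ is supported in $\overline{\Omega_1}\subset\Omega_2$, the pairing $(v,G|_{\Omega_1})_{\tilde W^{-1,p'}(\Omega_1),W^{1,p}(\Omega_1)}$ agrees with $(v|_{\Omega_2},G)_{W^{-1,p'}(\Omega_2),W^{1,p}_0(\Omega_2)}$ independently of the chosen extension. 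Substituting $v|_{\Omega_2}=-\Delta\varphi$ and using Green's identity together with $\varphi|_{\p\Omega_2}=0$,
\begin{equation*}
(v,G|_{\Omega_1})=\int_{\Omega_2}\nabla\varphi\cdot\nabla G\,dx=\int_{\Omega_2}\varphi\,a\,dx.
\end{equation*}
The annihilation hypothesis therefore forces $\int_{\Omega_2}\varphi\,a\,dx=0$ for every $a\in C^\infty_0(\Omega_2\setminus\overline{\Omega_1})$, and by $L^p$-density this yields $\varphi=0$ almost everywhere on $\Omega_2\setminus\overline{\Omega_1}$.

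\textbf{Recovering the boundary condition on $\p\Omega_1$.} Two applications of Proposition~\ref{prop_char_W_1_p_0} upgrade this to $\varphi|_{\Omega_1}\in W^{1,p'}_0(\Omega_1)$: extending $\varphi$ by zero through $\p\Omega_2$ gives a function in $W^{1,p'}(\R^n)$ that vanishes on $\R^n\setminus\overline{\Omega_1}$, i.e.\ the zero extension of $\varphi|_{\Omega_1}$, so $\varphi|_{\Omega_1}$ has vanishing trace on $\p\Omega_1$. For $u\in S$ I construct $\tilde u\in W^{1,p}_0(\Omega_2)$ by gluing $u$ on $\Omega_1$ to a function $w\in W^{1,p}(\Omega_2\setminus\overline{\Omega_1})$ obtained from trace surjectivity, so that $w$ matches $u$ on $\p\Omega_1\cap\overline{\Omega_2\setminus\overline{\Omega_1}}$ and vanishes on $\p\Omega_2\setminus\overline{U}$; compatibility at the junction $\overline{U}$ is automatic since $u|_{\overline{U}}=0$ by hypothesis. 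Unwinding the dualities and using $\varphi\equiv 0$ on $\Omega_2\setminus\overline{\Omega_1}$ together with the vanishing trace of $\varphi|_{\Omega_1}$,
\begin{equation*}
(v,u)=\int_{\Omega_2}\nabla\varphi\cdot\nabla\tilde u\,dx=\int_{\Omega_1}\nabla\varphi\cdot\nabla u\,dx=\int_{\Omega_1}\varphi\,(-\Delta u)\,dx=0,
\end{equation*}
the last step following by approximating $\varphi|_{\Omega_1}$ in $W^{1,p'}_0(\Omega_1)$ by $C^\infty_0(\Omega_1)$ functions and using the harmonicity of $u$.

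\textbf{Main obstacle.} The genuine difficulty lies in carrying out the duality bookkeeping in the $L^p$-based Sobolev framework, where the clean $L^2$ self-duality of \cite{DKSU,Krup_Uhlmann_2} is no longer available. Making sense of $v|_{\Omega_2}$, verifying that the pairings are extension-independent, justifying Green's identity for $\varphi$ only in $W^{1,p'}_0(\Omega_2)$, and producing the extension $\tilde u\in W^{1,p}_0(\Omega_2)$ from the hypothesis $u|_{\overline{U}}=0$ all require the properties of $\tilde W^{-1,p'}$, $W^{1,p}_0$, and the Dirichlet solvability collected in the preceding subsection; once these ingredients are in place, the argument follows the classical Runge approximation scheme.
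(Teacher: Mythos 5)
Your first half mirrors the paper's proof: the Hahn--Banach reduction, taking $v\in\tilde W^{-1,p'}(\Omega_1)$ annihilating $W$, solving the auxiliary Dirichlet problem $-\Delta\varphi = v|_{\Omega_2}$ on $\Omega_2$, testing against $a$ to conclude $\varphi = 0$ on $\Omega_2\setminus\overline{\Omega_1}$, and deducing $\varphi|_{\Omega_1}\in W^{1,p'}_0(\Omega_1)$ via Proposition~\ref{prop_char_W_1_p_0} are all exactly as in the paper. The divergence is in the last step, and that is where a genuine gap appears.

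The paper never constructs an extension of $u$ lying in $W^{1,p}_0(\Omega_2)$. Instead it takes an \emph{arbitrary} Stein extension $\text{Ext}(u)\in W^{1,p}(\R^n)$ and decomposes $v = -\Delta f - g$, where $g = -\Delta f - v\in W^{-1,p'}(\R^n)$ is supported on $\overline U = \p\Omega_1\cap\p\Omega_2$; the term $(-\Delta f,\text{Ext}(u))$ vanishes by harmonicity (approximating $f$ by $C^\infty_0(\Omega_1)$), while $(g,\text{Ext}(u))$ vanishes because Proposition~\ref{prop_distributions_support_hyperplane} writes $g = h\otimes\delta_{\p\Omega_1}$ with $h\in B^{-(1-1/p)}_{p',p'}(\p\Omega_1)$ supported in $\overline U$, and $u$ vanishes there. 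You replace this with a direct computation via a special extension $\tilde u\in W^{1,p}_0(\Omega_2)$ of $u$, which would indeed bypass Proposition~\ref{prop_distributions_support_hyperplane} entirely. The problem is the existence of $\tilde u$. You invoke "trace surjectivity" on $\Omega_2\setminus\overline{\Omega_1}$, but that set is not a Lipschitz domain: since $\p\Omega_1$ and $\p\Omega_2$ are both $C^\infty$ and coincide on $\overline U$, they are necessarily \emph{tangent} along $\p U$, so $\Omega_2\setminus\overline{\Omega_1}$ has a cusp there. The trace theorems and the solvability result (Theorem~\ref{thm_solvability_Dirichlet}) collected in Section 2.2 are stated for smooth domains and do not apply to this cusp region, and the observation that "$u|_{\overline U}=0$ makes compatibility automatic" only ensures continuity of the prescribed boundary data, not membership in the relevant trace space with the decay forced by the cusp geometry. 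So the step "producing the extension $\tilde u\in W^{1,p}_0(\Omega_2)$," which you flag as an obstacle to be handled by "the properties collected in the preceding subsection," is in fact not handled by any of those tools; it is precisely the nontrivial point that the paper's decomposition $v = -\Delta f - g$ together with Proposition~\ref{prop_distributions_support_hyperplane} is designed to circumvent. To repair your argument you would need a separate lemma asserting that $u\in C^\infty(\overline{\Omega_1})$ with $u|_{\overline U}=0$ extends to $W^{1,p}_0(\Omega_2)$, and proving this cleanly seems to require work comparable to Proposition~\ref{prop_distributions_support_hyperplane} itself.
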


\begin{proof}
We shall follow the proof of \cite[Lemma 2.2]{Krup_Uhlmann_2} closely, adapting it to the $L^p$ based Sobolev spaces.  Let $v\in \tilde W^{-1,p'}(\Omega_1)$,  $\frac{1}{p}+\frac{1}{p'}=1$, be such that
\begin{equation}
\label{eq_1_1}
(v, \mathcal{G}a|_{\Omega_1})_{\tilde W^{-1,p'}(\Omega_1), W^{1,p}(\Omega_1)}= 0
\end{equation}
 for any $a\in C^\infty(\overline{\Omega_2})$, $\supp(a)\subset \Omega_2\setminus\overline{\Omega_1}$. In view of the Hahn--Banach theorem, we have to prove that 
\[ 
 (v, u)_{\tilde W^{-1,p'}(\Omega_1), W^{1,p}(\Omega_1)}=0,
\] 
 for any $u\in S$.

To that end, we first note that as $\mathcal{G}a\in C^\infty(\overline{\Omega_2})$ and $\mathcal{G}a|_{\p \Omega_2}=0$, it follows from \eqref{eq_1_1_1} that $\mathcal{G}a\in W^{1,p}_0(\Omega_2)$.  By Proposition \ref{prop_char_W_1_p_0}, we can view $\mathcal{G}a$ as an element of  $W^{1,p}(\R^n)$ via an extension by 0 to $\R^n\setminus \Omega_2$.  By the definition of $W^{1,p}_0(\Omega_2)$, there exists a sequence $\varphi_j\in C^\infty_0(\Omega_2)$ such that $\varphi_j\to \mathcal{G}a$ in $W^{1,p}(\R^n)$. We have in view of \eqref{eq_1_1}  that
\begin{equation}
\label{eq_1_2}
\begin{aligned}
0=(v, \mathcal{G}a)_{W^{-1,p'}(\R^n), W^{1,p}(\R^n)}= \lim_{j\to \infty} (v, \varphi_j)_{W^{-1,p'}(\R^n), W^{1,p}(\R^n)}\\=\lim_{j\to \infty} (v, \varphi_j)_{W^{-1,p'}(\Omega_2),W^{1,p}_0(\Omega_2)}=
(v, \mathcal{G}a)_{W^{-1,p'}(\Omega_2),W^{1,p}_0(\Omega_2)}.
\end{aligned}
\end{equation}

Next, Proposition \ref{prop_density_in_tilde_space} implies that there is a sequence $v_j\in C^\infty_0(\Omega_1)$ such that $v_j\to v$ in $W^{-1,p'}(\R^n)$. Consider the following  Dirichlet problems,
\begin{equation}
\label{eq_1_3}
\begin{cases}
-\Delta f=v|_{\Omega_2}\in W^{-1,p'}(\Omega_2) & \text{in}\quad \Omega_2,\\
f=0 & \text{on}\quad \p \Omega_2,
\end{cases}\quad \begin{cases}
-\Delta f_j=v_j & \text{in}\quad \Omega_2,\\
f_j=0 & \text{on}\quad \p \Omega_2.
\end{cases}
\end{equation}
By Theorem \ref{thm_solvability_Dirichlet} and \eqref{eq_1_1_1},  the problems \eqref{eq_1_3} have unique solutions $f\in W^{1,p'}_0(\Omega_2)$ and $f_j\in C^\infty(\overline{\Omega_2})\cap W^{1,p'}_0(\Omega_2)$, respectively. 

Using \eqref{eq_1_2}, \eqref{eq_1_3}, we get 
\begin{equation}
\label{eq_1_4}
\begin{aligned}
0&=(v, \mathcal{G}a)_{W^{-1,p'}(\Omega_2),W^{1,p}_0(\Omega_2)}=\lim_{j\to \infty} (v_j, \mathcal{G}a)_{W^{-1,p'}(\Omega_2),W^{1,p}_0(\Omega_2)}\\
 &=\lim_{j\to \infty} (-\Delta f_j, \mathcal{G}a)_{W^{-1,p'}(\Omega_2),W^{1,p}_0(\Omega_2)}
 = \lim_{j\to \infty}\int_{\Omega_2} (-\Delta f_j) \overline{\mathcal{G}a} dx\\
 &=\lim_{j\to \infty}\int_{\Omega_2}  f_j \overline{a} dx= \int_{\Omega_2}f\overline{a}dx.
\end{aligned}
\end{equation}
Here we have used Green's formula,  the fact that $f_j|_{\p \Omega_2}=\mathcal{G}a|_{\p \Omega_2}=0$, and that 
\[
\|f-f_j\|_{W^{1,p'}(\Omega_2)}\le C\|v-v_j\|_{W^{-1,p'}(\R^n)},
\]
which is a consequence of  Theorem \ref{thm_solvability_Dirichlet}.

It follows from \eqref{eq_1_4} that $f=0$ in $\Omega_2\setminus \overline{\Omega_1}$. This together with the fact that  $f\in W^{1,p'}_0(\Omega_2)$, in view of Proposition \ref{prop_char_W_1_p_0},  allows us to conclude that  $f\in W^{1,p'}_0(\Omega_1)$. Thus, there exists a sequence $\hat f_j\in C^\infty_0(\Omega_1)$ be such that $\hat f_j\to f$ in $W^{1,p'}(\R^n)$, and therefore,   $-\Delta\hat f_j\to -\Delta f $ in $W^{-1,p'}(\R^n)$.  

Let $u\in S$ and let $\text{Ext}(u)\in W^{1,p}(\R^n)$ be an extension of $u$. Using Green's formula, we get
\begin{equation}
\label{eq_1_5}
\begin{aligned}
 (-\Delta f, \text{Ext}(u))_{W^{-1,p'}(\R^n), W^{1,p}(\R^n)}
&=\lim_{j\to \infty}((-\Delta \hat f_j), \text{Ext}(u))_{W^{-1,p'}(\R^n), W^{1,p}(\R^n)}\\
&=\lim_{j\to \infty} \int_{\Omega_1} (-\Delta \hat f_j) \overline{u}dx=0.
\end{aligned}
\end{equation}

Let $g=-\Delta f-v\in W^{-1,p'}(\R^n)$.  We have that $\supp(g)\subset \p \Omega_1$, in view of the fact that  $\supp(v), \supp(f)\subset \overline{\Omega_1}$,  and \eqref{eq_1_3}. An application of Proposition \ref{prop_distributions_support_hyperplane} gives therefore
\[
g=h\otimes\delta_{\p \Omega_1}, \quad h\in B^{-(1-1/p)}_{p',p'}(\p \Omega_1).
\]
It also follows from \eqref{eq_1_3} that $\supp(g)\subset \p \Omega_1\cap \p \Omega_2=\overline{U}$, and hence, $\supp(h)\subset \overline{U}$. Here $U\subset \p \Omega_1$ is a bounded open set with $C^\infty$ boundary, and therefore, there exists a sequence $h_j\in C^\infty_0(U)$ such that $h_j\to h$ in $B^{-(1-1/p)}_{p',p'}(\p \Omega_1)$, see  \cite[Section 4.3.2, p. 318]{Triebel_book_1978}. Thus, we get 
\begin{equation}
\label{eq_1_6}
\begin{aligned}
 &(g, \text{Ext}(u))_{W^{-1,p'}(\R^n), W^{1,p}(\R^n)}= (h, u|_{\p \Omega_1})_{B^{-(1-1/p)}_{p',p'}(\p \Omega_1), W^{1-1/p,p}(\p \Omega_1)}\\
 &=\lim_{j\to \infty}  (h_j, u|_{\p \Omega_1})_{B^{-(1-1/p)}_{p',p'}(\p \Omega_1), B^{1-1/p}_{p,p}(\p \Omega_1)}=\lim_{j\to \infty}  \int_{\p \Omega_1} h_j \overline{u} dS=0,
\end{aligned}
\end{equation}
where  the last equality follows from the fact that $u|_{\p \Omega_1\cap \p \Omega_2}=0$.  Combining \eqref{eq_1_5} and \eqref{eq_1_6}, we see  that
\begin{align*}
(v, &u)_{\tilde W^{-1,p'}(\Omega_1), W^{1,p}(\Omega_1)} \\
&=(-\Delta f, \text{Ext}(u))_{W^{-1, p'}(\R^n), W^{1,p}(\R^n)} -(g, \text{Ext}(u))_{W^{-1,p'}(\R^n), W^{1,p}(\R^n)} =0.
\end{align*}
 \end{proof}

\subsection{From local to global results. Completion of proof of Theorem \ref{thm_main_2}} 
\label{subsection_global}
We follow \cite{DKSU}.  Let $\tilde \Gamma=\p \Omega\setminus \Gamma$.  Assuming  that $f$ satisfies \eqref{eq_int_3} and using Proposition \ref{prop_local}, we would like to show that $f$ vanishes inside $\Omega$. To that end,  let $x_0\in  \Gamma$   and let us fix a point $x_1\in \Omega$.  Let $\theta:[0,1]\to \overline{\Omega}$ be a $C^1$ curve joining $x_0$ to $x_1$ such that $\theta(0)=x_0$, $\theta'(0)$ is the interior normal to $\p \Omega$ at $x_0$ and $\theta(t)\in \Omega$, for all $t\in (0,1]$.  We set
\[
\Theta_\varepsilon(t)=\{x\in \overline{\Omega}: d(x, \theta([0,t]))\le \varepsilon\}
\]
and 
\[
I=\{t\in [0,1]: f\text{ vanishes a.e. on } \Theta_\varepsilon(t)\cap \Omega\}.
\]
By Proposition \ref{prop_local},  we have $0\in I$ if $\varepsilon>0$ is small enough. As explained in \cite{DKSU}, it suffices to prove that the set $I$ is open in $[0,1]$. 

To this end, let $t\in I$ and $\varepsilon>0$ be small enough so that $\p \Theta_\varepsilon(t)\cap \p \Omega\subset \Gamma$. Arguing as in  \cite{Krup_Uhlmann_2}, \cite{DKSU}, we smooth out  $\Omega\setminus \Theta_\varepsilon(t)$ into an open subset $\Omega_1$ of $\Omega$  with smooth boundary such that 
\[
\Omega_1\supset \Omega\setminus \Theta_\varepsilon(t), \quad \p \Omega\cap \p\Omega_1\supset \tilde \Gamma,
\]
and $\p \Omega_1\cap \p \Omega=\overline{U}$ where $U\subset \p \Omega_1$ is an open set with $C^\infty$ boundary. By smoothing out the set $\Omega\cup B(x_0, \varepsilon')$, with $0<\varepsilon'\ll \varepsilon$ sufficiently small, we enlarge  the set $\Omega$  into an open set $\Omega_2$ with smooth boundary so that
\[
\p \Omega_2\cap\p \Omega\supset \p \Omega_1\cap\p \Omega=\p \Omega_1\cap \p\Omega_2 \supset \tilde \Gamma.
\]

Let $G_{\Omega_2}$ be the Green kernel associated to the open set $\Omega_2$,
\[
-\Delta_y G_{\Omega_2}(x,y)=\delta(x-y), \quad G_{\Omega_2}(x,\cdot)|_{\p \Omega_2}=0,
\]
and let us consider 
\begin{align*}
&v(x^{(1)}, \dots, x^{(m+1)})=\\
&\int_{\Omega_1} f(y) \bigg(\sum_{k=1}^m  \prod_{r=1,r\ne k}^m (\omega\cdot \nabla_y G_{\Omega_2}(x^{(r)},y))\nabla_y G_{\Omega_2}(x^{(k)},y)\bigg)\cdot \nabla_y G_{\Omega_2}(x^{(m+1)},y)dy,
\end{align*}
where $x^{(1)}, \dots, x^{(m+1)}\in \Omega_2\setminus\overline{\Omega_1}$. The function $v$ is harmonic in all variables $x^{(1)}, \dots, x^{(m+1)}\in \Omega_2\setminus\overline{\Omega_1}$.  Since $f=0$ on $\Theta_\varepsilon(t)\cap \Omega$, we have
\begin{align*}
&v(x^{(1)}, \dots, x^{(m+1)})=\\
&\int_{\Omega} f(y) \bigg(\sum_{k=1}^m  \prod_{r=1,r\ne k}^m (\omega\cdot \nabla_y G_{\Omega_2}(x^{(r)},y))\nabla_y G_{\Omega_2}(x^{(k)},y)\bigg)\cdot \nabla_y G_{\Omega_2}(x^{(m+1)},y)dy,
\end{align*}
where $x^{(1)}, \dots, x^{(m+1)}\in \Omega_2\setminus\overline{\Omega_1}$.  Now when $x^{(l)}\in \Omega_2\setminus\overline{\Omega}$, the Green function $G_{\Omega_2} (x^{(l)},\cdot )\in C^\infty(\overline{\Omega})$ is  harmonic on $\Omega$, and $G_{\Omega_2} (x^{(l)},\cdot )|_{\tilde \Gamma}=0$.  By the orthogonality condition \eqref{eq_int_3}, we have $v(x^{(1)},\dots, x^{(m+1)})=0$ when $x^{(l)}\in \Omega_2\setminus\overline{\Omega}$, $l=1,\dots, m+1$.

As  $v(x^{(1)},\dots, x^{(m+1)})$  is harmonic in all variables $x^{(1)}, \dots, x^{(m+1)}\in \Omega_2\setminus\overline{\Omega_1}$, and $\Omega_2\setminus\overline{\Omega_1}$ is connected, by unique continuation, we get that  $v(x^{(1)},\dots, x^{(m+1)})=0$ when $x^{(1)}, \dots, x^{(m+1)}\in \Omega_2\setminus\overline{\Omega_1}$, i.e.
\begin{equation}
\label{eq_1_8}
\begin{aligned}
\int_{\Omega_1} f(y) \bigg(\sum_{k=1}^m  \prod_{r=1,r\ne k}^m (\omega\cdot \nabla_y G_{\Omega_2}(x^{(r)},y))\nabla_y G_{\Omega_2}(x^{(k)},y)\bigg)\cdot \nabla_y G_{\Omega_2}(x^{(m+1)},y)dy\\
=0, \quad x^{(1)},\dots, x^{(m+1)}\in \Omega_2\setminus\overline{\Omega_1}.
\end{aligned}
\end{equation}

Let $a_l\in C^\infty(\overline{\Omega_2})$, $\supp(a_l)\subset \Omega_2\setminus\overline{\Omega_1}$, $l=1,\dots,m$. Multiplying \eqref{eq_1_8} by $a_1(x^{(1)})\cdots a_m(x^{(m+1)})$,  and integrating, we get 
\begin{align*}
\int_{\Omega_1} f(y) \bigg(\sum_{k=1}^m  \prod_{r=1,r\ne k}^m \int_{\Omega_2}& (\omega\cdot \nabla_y G_{\Omega_2}(x^{(r)},y))a_r(x^{(r)})dx^{(r)}\\
&\int_{\Omega_2}
\nabla_y G_{\Omega_2}(x^{(k)},y) a_k(x^{(k)})dx^{(k)} \bigg)\\
&\cdot  \int_{\Omega_2}\nabla_y G_{\Omega_2}(x^{(m+1)},y)a_{m+1}(x^{(m+1)})dx^{(m+1)}dy
=0.
\end{align*}
Thus, we have 
\begin{equation}
\label{eq_1_9}
\begin{aligned}
\int_{\Omega_1} f(y) \bigg(\sum_{k=1}^m  \prod_{r=1,r\ne k}^m (\omega\cdot \nabla v^{(r)})\nabla v^{(k)}\bigg)\cdot \nabla v^{(m+1)}dy=0,
\end{aligned}
\end{equation}
for all $v^{(1)}, \dots, v^{(m)}\in W$, where $W$ is defined in Lemma \ref{lem_Runge}. 

The $(m+1)$--linear form,
\begin{align*}
&W^{1,m+1}(\Omega_1)\times \dots\times W^{1,m+1}(\Omega_1)\to \C, \\ 
&(v^{(1)}, \dots, v^{(m)})\mapsto \int_{\Omega_1} f(y) \bigg(\sum_{k=1}^m  \prod_{r=1,r\ne k}^m (\omega\cdot \nabla v^{(r)})\nabla v^{(k)}\bigg)\cdot \nabla v^{(m+1)}dy
\end{align*}
is continuous in view of  H\"older's inequality. 
An application of Lemma \ref{lem_Runge} with $p=m+1$ shows that  \eqref{eq_1_9} holds for all  $v^{(1)}, \dots, v^{(m)}\in C^\infty(\overline{\Omega_1})$ harmonic in $\Omega_1$ which vanish on $\p\Omega_1\cap \p\Omega_2$. 
 Proposition \ref{prop_local} implies that $f$ vanishes on a neighborhood of $\p \Omega_1\setminus(\p \Omega_1\cap \p \Omega_2)$, and therefore, $I$ is an open set. The proof of  Theorem \ref{thm_main_2} is complete.

\section{Proof of Theorem \ref{thm_main}}
\label{sec_Thm_main}

First it follows from (i) and (ii) that for each $\lambda\in \C$ fixed,   $\gamma$ can be expanded into a power series 
\begin{equation}
\label{eq_int_1}
\gamma(x,\lambda, z)=1+\sum_{k=1}^\infty \p^k_z\gamma(x,\lambda,0)\frac{z^k}{k!}, \quad \p^k_z\gamma(x,\lambda,0)\in C^{1,\alpha}(\overline{\Omega}), \quad \lambda, z\in \C, 
\end{equation}
converging in the $C^\alpha(\overline{\Omega})$ topology.  Furthermore, the map $\C\ni \lambda\mapsto \p^k_z\gamma(x,\lambda,0)$ is holomorphic with values in $C^\alpha(\overline{\Omega})$.

Let $\varepsilon=(\varepsilon_1, \dots, \varepsilon_m)\in \C^m$, $m\ge 2$, and consider the Dirichlet problem 
\eqref{eq_int_2} with 
\begin{equation}
\label{eq_f_new}
f=\sum_{k=1}^m \varepsilon_k f_k, \quad f_k\in C^\infty(\p \Omega), \quad \supp(f_k)\subset \Gamma, \quad k=1,\dots, m.
\end{equation}
Then for all $|\varepsilon|$ sufficiently small, the problem \eqref{eq_int_2} has a unique solution $u(\cdot; \varepsilon)\in C^{2,\alpha}(\overline{\Omega})$ close to $\lambda$ in $C^{2,\alpha}(\overline{\Omega})$-topology, which depends holomorphically on  $\varepsilon\in \text{neigh}(0,\C^m)$, with values in $C^{2,\alpha}(\overline{\Omega})$.

Let $\lambda\in \Sigma$ be arbitrary but fixed. We shall use an induction argument on $m\ge 2$ to prove that the equality 
\[
\Lambda_{\gamma_1}^\Gamma\bigg(\lambda+\sum_{k=1}^m \varepsilon_k f_k\bigg)=\Lambda_{\gamma_2}^\Gamma\bigg(\lambda+\sum_{k=1}^m \varepsilon_k f_k\bigg), 
\]
for all $|\varepsilon|$ sufficiently small and all $f_k\in C^\infty(\p \Omega)$, $\supp(f_k)\subset \Gamma$,  $k=1,\dots, m$, 
gives that $\p_z^{m-1}\gamma_1(x,\lambda, 0)=\p_z^{m-1}\gamma_1(x,\lambda, 0)$. 

First let $m=2$ and we proceed to carry out a second order linearization of the partial Dirichlet--to--Neumann map. Let $u_j=u_j(x;\varepsilon)\in C^{2,\alpha}(\overline{\Omega})$  be the unique solution close to $\lambda$ in $C^{2,\alpha}(\overline{\Omega})$-topology of the Dirichlet problem, 
\begin{equation}
\label{eq_5_1}
\begin{cases}
\Delta u_j+\div \big( \sum_{k=1}^\infty \p^k_z\gamma_j(x,u_j,0)\frac{(\omega\cdot \nabla u_j)^k}{k!}\nabla u_j \big)=0& \text{in}\quad \Omega,\\
u_j=\lambda+\varepsilon_1f_1+\varepsilon_2f_2 & \text{on}\quad \p\Omega,
\end{cases}
\end{equation}
for $j=1,2$. Applying $\p_{\varepsilon_l}|_{\varepsilon=0}$, $l=1,2$, to \eqref{eq_5_1}, and  using that  $u_j(x,0)=\lambda$, we get 
\begin{equation}
\label{eq_5_2}
\begin{cases}
\Delta v_j^{(l)}=0& \text{in}\quad \Omega,\\
v_j^{(l)}=f_l & \text{on}\quad \p\Omega,
\end{cases}
\end{equation}
where $v_j^{(l)}=\p_{\varepsilon_l}u_j|_{\varepsilon=0}$. It follows that $v^{(l)}:=v^{(l)}_1=v^{(l)}_2\in C^\infty(\overline{\Omega})$. 

Applying $\p_{\varepsilon_1}\p_{\varepsilon_2}|_{\varepsilon=0}$ to  \eqref{eq_5_1} and letting $w_j=\p_{\varepsilon_1}\p_{\varepsilon_2}u_j|_{\varepsilon=0}$,  we obtain that 
\begin{equation}
\label{eq_5_3}
\begin{cases}
\Delta w_j +\div\big(\p_z\gamma_j(x,\lambda, 0) ((\omega\cdot \nabla v^{(1)})\nabla v^{(2)}+ (\omega\cdot \nabla v^{(2)})\nabla v^{(1)} ) \big) =0& \text{in}\quad \Omega,\\
w_j=0 & \text{on}\quad \p\Omega,
\end{cases}
\end{equation}
$j=1,2$. 

The fact that $\Lambda_{\gamma_1}^\Gamma(\lambda+\varepsilon_1f_1+\varepsilon_2f_2)=\Lambda_{\gamma_1}^\Gamma(\lambda+\varepsilon_1f_1+\varepsilon_2f_2)$ for  all small $\varepsilon$, and all $f_1,f_2\in C^\infty(\p \Omega)$ with $\supp(f_1),\supp(f_2)\subset\Gamma$, gives that 
\begin{equation}
\label{eq_5_4}
\begin{aligned}
\bigg(1+ \sum_{k=1}^\infty \p^k_z\gamma_1(x,u_1,0)&\frac{(\omega\cdot \nabla u_1)^k}{k!}\bigg)\p_\nu u_1\bigg|_{\Gamma} \\
&=
\bigg(1+ \sum_{k=1}^\infty \p^k_z\gamma_2(x,u_2,0)\frac{(\omega\cdot \nabla u_2)^k}{k!}\bigg)\p_\nu u_2\bigg|_{\Gamma}.
\end{aligned}
\end{equation}
An application of $\p_{\varepsilon_1}\p_{\varepsilon_2}|_{\varepsilon=0}$ to  \eqref{eq_5_4} yields that 
\begin{equation}
\label{eq_5_5}
\begin{aligned}
(\p_\nu w_1-\p_\nu w_2)|_{\Gamma} 
+&(\p_z \gamma_1(x,\lambda,0)- \p_z \gamma_2(x,\lambda,0))\\
 &\times \big((\omega\cdot \nabla v^{(1)})\p_\nu v^{(2)}+ (\omega\cdot \nabla v^{(2)})\p_\nu v^{(1)}  \big)\big|_{\Gamma}=0.
\end{aligned}
\end{equation}
Multiplying the difference of two equations in \eqref{eq_5_3} by $v^{(3)}\in C^\infty(\overline{\Omega})$ harmonic in $\Omega$, integrating over $\Omega$, using Green's formula and \eqref{eq_5_5}, we obtain that 
\begin{equation}
\label{eq_5_6}
\begin{aligned}
&\int_\Omega (\p_z\gamma_1(x,\lambda, 0)- \p_z\gamma_2(x,\lambda, 0)) ((\omega\cdot \nabla v^{(1)})\nabla v^{(2)}+ (\omega\cdot \nabla v^{(2)})\nabla v^{(1)} ) \cdot \nabla v^{(3)}dx\\
&=\int_{\p \Omega\setminus \Gamma}(\p_z\gamma_1(x,\lambda, 0)- \p_z\gamma_2(x,\lambda, 0)) ((\omega\cdot \nabla v^{(1)})\p_\nu v^{(2)}+ (\omega\cdot \nabla v^{(2)})\p_\nu v^{(1)} ) v^{(3)}dS\\
&+\int_{\p \Omega\setminus \Gamma}(\p_\nu w_1-\p_\nu w_2)v^{(3)}dS=0,
\end{aligned}
\end{equation}
provided that $\supp(v^{(3)}|_{\p \Omega})\subset \Gamma$.
It follows from \eqref{eq_5_6} that 
\begin{equation}
\label{eq_5_7}
\int_\Omega (\p_z\gamma_1(x,\lambda, 0)- \p_z\gamma_2(x,\lambda, 0)) ((\omega\cdot \nabla v^{(1)})\nabla v^{(2)}+ (\omega\cdot \nabla v^{(2)})\nabla v^{(1)} ) \cdot \nabla v^{(3)}dx=0,
\end{equation}
for all $v^{(l)}\in C^\infty(\overline{\Omega})$ harmonic in $\Omega$ such that $\supp(v^{(l)}|_{\p \Omega})\subset \Gamma$, $l=1,2,3$. An application of Theorem \ref{thm_main_2} with $m=2$ allows us to conclude that $\p_z\gamma_1(\cdot,\lambda, 0)=\p_z\gamma_2(\cdot,\lambda, 0)$ in $\Omega$.   Now as $\lambda\in \Sigma$ is arbitrary and the functions $\C\ni\tau\to \p_z\gamma_j(x,\tau,0)$, $j=1,2$, are holomorphic, by the uniqueness properties of holomorphic functions, we have $\p_z\gamma_1(\cdot,\cdot, 0)=\p_z\gamma_2(\cdot,\cdot, 0)$ in $\overline{\Omega}\times \C$.

Let $m\ge 3$ and assume that 
\begin{equation}
\label{eq_5_8_0}
\p_z^k\gamma_1(\cdot,\cdot, 0)=\p_z^k\gamma_2(\cdot,\cdot, 0)\text{ in }\overline{\Omega}\times \C,
\end{equation}
 for all $k=1,\dots, m-2$. Let $\lambda\in \Sigma$ be arbitrary but fixed. To prove that $\p_z^{m-1}\gamma_1(\cdot,\lambda, 0)=\p_z^{m-1}\gamma_2(\cdot,\lambda, 0)$ in $\overline{\Omega}$, we carry out the $m$th order linearization of the partial Dirichlet--to--Neumann map. 
In doing so, we let $u_j=u_j(x;\varepsilon)\in C^{2,\alpha}(\overline{\Omega})$ be the unique solution close to $\lambda$ in $C^{2,\alpha}(\overline{\Omega})$-topology of the Dirichlet problem, 
\begin{equation}
\label{eq_5_8}
\begin{cases}
\Delta u_j+\div \big( \sum_{k=1}^\infty \p^k_z\gamma_j(x,u_j,0)\frac{(\omega\cdot \nabla u_j)^k}{k!}\nabla u_j \big)=0& \text{in}\quad \Omega,\\
u_j=\lambda+\varepsilon_1f_1+\dots+ \varepsilon_mf_m & \text{on}\quad \p\Omega,
\end{cases}
\end{equation}
for $j=1,2$.  We shall next apply $\p_{\varepsilon_1}\dots\p_{\varepsilon_m}|_{\varepsilon=0}$ to \eqref{eq_5_8}. To this end, we first note that $\p_{\varepsilon_1}\dots\p_{\varepsilon_m}(\sum_{k=m}^\infty \p^k_z\gamma_j(x,u_j,0)\frac{(\omega\cdot \nabla u_j)^k}{k!}\nabla u_j )$ is a sum of terms each of them containing positive powers of $\nabla u_j$, which vanishes when $\varepsilon=0$. The only term in $\p_{\varepsilon_1}\dots\p_{\varepsilon_m} ( \p^{m-1}_z\gamma_j(x,u_j,0)\frac{(\omega\cdot \nabla u_j)^{m-1}}{(m-1)!}\nabla u_j)$  which does not contain a positive power of $\nabla u_j$ is 
\begin{equation}
\label{eq_5_9}
  \p^{m-1}_z\gamma_j(x,u_j,0) \bigg(\sum_{k=1}^m  \prod_{r=1,r\ne k}^m (\omega\cdot \nabla \p_{\varepsilon_r}u_j)\nabla \p_{\varepsilon_k}u_j\bigg).
\end{equation}
Finally, the expression $\p_{\varepsilon_1}\dots\p_{\varepsilon_m}(\sum_{k=1}^{m-2}\p^k_z\gamma_j(x,u_j,0)\frac{(\omega\cdot \nabla u_j)^k}{k!}\nabla u_j )|_{\varepsilon=0}$ is independent of $j=1,2$. Indeed, this follows from \eqref{eq_5_8_0},  the fact that this expression contains only the derivatives of $u_j$ of the form 
$\p^s_{\varepsilon_{l_1},\dots, \varepsilon_{l_s}}u_j|_{\varepsilon=0}$ with $s=1,\dots, m-1$, $\varepsilon_{l_1},\dots, \varepsilon_{l_s}\in \{\varepsilon_{1},\dots, \varepsilon_{m}\}$, and the fact that  
\begin{equation}
\label{eq_5_10}
\p^s_{\varepsilon_{l_1},\dots, \varepsilon_{l_s}}u_1|_{\varepsilon=0}= \p^s_{\varepsilon_{l_1},\dots, \varepsilon_{l_s}}u_2|_{\varepsilon=0},
\end{equation}
 for $s=1,\dots, m-1$, $\varepsilon_{l_1},\dots, \varepsilon_{l_s}\in \{\varepsilon_{1},\dots, \varepsilon_{m}\}$. The latter can be seen by induction on $s$, applying the operator $\p^s_{\varepsilon_{l_1},\dots, \varepsilon_{l_s}}|_{\varepsilon=0}$ to \eqref{eq_5_8} and using \eqref{eq_5_8_0} as well as the unique solvability of the Dirichlet problem for the Laplacian. 
Thus, an  application $\p_{\varepsilon_1}\dots\p_{\varepsilon_m}|_{\varepsilon=0}$ to \eqref{eq_5_8} gives  
\begin{equation}
\label{eq_5_11}
 \begin{cases}
\Delta w_j+\div \big(  \p^{m-1}_z\gamma_j(x,\lambda,0) \big(\sum_{k=1}^m  \prod_{r=1,r\ne k}^m (\omega\cdot \nabla v^{(r)})\nabla v^{(k)}\big)\big)=H_m& \text{in}\quad \Omega,\\
w_j=0 & \text{on}\quad \p\Omega,
\end{cases}
\end{equation}
cf. \eqref{eq_5_9}. Here $w_j=\p_{\varepsilon_1}\dots\p_{\varepsilon_m}u_j|_{\varepsilon=0}$ and 
\[
H_m(x,\lambda):=-\div\bigg(\p_{\varepsilon_1}\dots\p_{\varepsilon_m}\bigg(\sum_{k=1}^{m-2}\p^k_z\gamma_j(x,u_j,0)\frac{(\omega\cdot \nabla u_j)^k}{k!}\nabla u_j \bigg)\bigg|_{\varepsilon=0}\bigg).
\]

The fact that $\Lambda_{\gamma_1}^\Gamma(\lambda+\varepsilon_1f_1+\dots+\varepsilon_mf_m)=\Lambda_{\gamma_1}^\Gamma(\lambda+\varepsilon_1f_1+\dots+\varepsilon_mf_m)$ for  all small $\varepsilon$ and all $f_k\in C^\infty(\p \Omega)$ with $\supp(f_k),\subset\Gamma$, $k=1,\dots, m$, yields \eqref{eq_5_4}.  Applying of $\p_{\varepsilon_1}\dots\p_{\varepsilon_m}|_{\varepsilon=0}$ to  \eqref{eq_5_4}, using \eqref{eq_5_8_0} and \eqref{eq_5_10}, we obtain that 
\begin{equation}
\label{eq_5_12}
\begin{aligned}
(\p_\nu w_1&-\p_\nu w_2)|_{\Gamma} \\
&+(\p_z^{m-1} \gamma_1(x,\lambda,0)-\p_z^{m-1} \gamma_2(x,\lambda,0)) \bigg(\sum_{k=1}^m  \prod_{r=1,r\ne k}^m (\omega\cdot \nabla v^{(r)})\p_\nu v^{(k)}\bigg)\bigg|_{\Gamma}=0.
\end{aligned}
\end{equation}

Using \eqref{eq_5_11}, \eqref{eq_5_12}, and proceeding as in the case $m=2$, we get 
\begin{equation}
\label{eq_5_13}
\int_\Omega (  \p^{m-1}_z\gamma_1(x,\lambda,0) - \p^{m-1}_z\gamma_1(x,\lambda,0) )\bigg(\sum_{k=1}^m  \prod_{r=1,r\ne k}^m (\omega\cdot \nabla v^{(r)})\nabla v^{(k)}\bigg)\cdot \nabla v^{(m+1)}dx=0,
\end{equation}
for all $v^{(l)}\in C^\infty(\overline{\Omega})$ harmonic in $\Omega$ such that $\supp(v^{(l)}|_{\p \Omega})\subset \Gamma$, $l=1,\dots, m+1$. Applying Theorem \ref{thm_main_2}, we conclude that $\p_z^{m-1}\gamma_1(\cdot,\lambda, 0)=\p_z^{m-1}\gamma_2(\cdot,\lambda, 0)$ in $\overline{\Omega}$. Now as $\lambda\in \Sigma$ is arbitrary and the functions $\C\ni\tau\to \p^{m-1}_z\gamma_j(x,\tau,0)$, $j=1,2$, are holomorphic, we have $\p^{m-1}_z\gamma_1(\cdot,\cdot, 0)=\p^{m-1}_z\gamma_2(\cdot,\cdot, 0)$ in $\overline{\Omega}\times \C$. This completes the proof of Theorem \ref{thm_main}.

\section{Proof of Theorem \ref{thm_main_semilinear}}

\label{sec_Thm_main_semilinear}

First it follows from (a) and (b) that   $\gamma$ can be expanded into the following power series, 
\begin{equation}
\label{eq_int_0_2}
\gamma(x,\lambda)=1+\sum_{k=1}^\infty \p^k_\lambda \gamma(x,0)\frac{\lambda^k}{k!}, \quad \p^k_\lambda\gamma(x,0)\in C^{1,\alpha}(\overline{\Omega}), \quad \lambda\in \C,
\end{equation}
converging in the $C^{1,\alpha}(\overline{\Omega})$ topology. 

Let $\varepsilon=(\varepsilon_1, \dots, \varepsilon_m)\in \C^m$, $m\ge 2$, and consider the Dirichlet problem 
\eqref{eq_int_0_1} with $f$ given by \eqref{eq_f_new}. For all $|\varepsilon|$ sufficiently small, the problem \eqref{eq_int_0_1} has a unique small solution $u(\cdot; \varepsilon)\in C^{2,\alpha}(\overline{\Omega})$, which depends holomorphically on  $\varepsilon\in \text{neigh}(0,\C^m)$.  

As in the proof of Theorem \ref{thm_main}, we use an induction argument on $m\ge 2$ to show that $\Lambda_{\gamma_1}^\Gamma=\Lambda_{\gamma_2}^\Gamma$ implies that $\p_\lambda^{m-1}\gamma_1(x, 0)=\p_\lambda^{m-1}\gamma_1(x, 0)$. 

First let $m=2$ and we perform a second order linearization of the partial Dirichlet--to--Neumann map.  Let $u_j=u_j(x;\varepsilon)\in C^{2,\alpha}(\overline{\Omega})$  be the unique solution small solution of the Dirichlet problem, 
\begin{equation}
\label{eq_8_1}
\begin{cases}
\Delta u_j+\div \big( \sum_{k=1}^\infty \p^k_\lambda\gamma_j(x,0)\frac{ u_j^k}{k!}\nabla u_j \big)=0& \text{in}\quad \Omega,\\
u_j=\varepsilon_1f_1+\varepsilon_2f_2 & \text{on}\quad \p\Omega,
\end{cases}
\end{equation}
for $j=1,2$. Applying $\p_{\varepsilon_l}|_{\varepsilon=0}$, $l=1,2$, to \eqref{eq_8_1}, and  using that  $u_j(x,0)=0$, we see that  
\begin{equation}
\label{eq_8_2}
\begin{cases}
\Delta v_j^{(l)}=0& \text{in}\quad \Omega,\\
v_j^{(l)}=f_l & \text{on}\quad \p\Omega,
\end{cases}
\end{equation}
where $v_j^{(l)}=\p_{\varepsilon_l}u_j|_{\varepsilon=0}$. We have therefore $v^{(l)}:=v^{(l)}_1=v^{(l)}_2\in C^\infty(\overline{\Omega})$. 

Applying $\p_{\varepsilon_1}\p_{\varepsilon_2}|_{\varepsilon=0}$ to  \eqref{eq_8_1} and setting $w_j=\p_{\varepsilon_1}\p_{\varepsilon_2}u_j|_{\varepsilon=0}$,  we get 
\begin{equation}
\label{eq_8_3}
\begin{cases}
\Delta w_j +\div\big(\p_\lambda\gamma_j(x, 0) ( v^{(1)}\nabla v^{(2)}+  v^{(2)}\nabla v^{(1)} ) \big) =0& \text{in}\quad \Omega,\\
w_j=0 & \text{on}\quad \p\Omega,
\end{cases}
\end{equation}
$j=1,2$. 
The fact that $\Lambda_{\gamma_1}^\Gamma(\varepsilon_1f_1+\varepsilon_2f_2)=\Lambda_{\gamma_1}^\Gamma(\varepsilon_1f_1+\varepsilon_2f_2)$ for  all small $\varepsilon$, and all $f_1,f_2\in C^\infty(\p \Omega)$ with $\supp(f_1),\supp(f_2)\subset\Gamma$, implies that 
\begin{equation}
\label{eq_8_4}
\bigg(1+ \sum_{k=1}^\infty \p^k_\lambda\gamma_1(x,0)\frac{ u_1^k}{k!}\bigg)\p_\nu u_1\bigg|_{\Gamma} =
\bigg(1+ \sum_{k=1}^\infty \p^k_z\gamma_2(x,0)\frac{ u_2^k}{k!}\bigg)\p_\nu u_2\bigg|_{\Gamma}.
\end{equation}
Applying  $\p_{\varepsilon_1}\p_{\varepsilon_2}|_{\varepsilon=0}$ to  \eqref{eq_8_4}, we get  
\begin{equation}
\label{eq_8_5}
\begin{aligned}
(\p_\nu w_1-\p_\nu w_2)|_{\Gamma} 
+(\p_\lambda \gamma_1(x,0)- \p_\lambda \gamma_2(x, 0))
  \big( v^{(1)}\p_\nu v^{(2)}+ v^{(2)}\p_\nu v^{(1)}  \big)\big|_{\Gamma}=0.
\end{aligned}
\end{equation}
Multiplying the difference of two equations in \eqref{eq_8_3} by $v^{(3)}\in C^\infty(\overline{\Omega})$ harmonic in $\Omega$, integrating over $\Omega$, using Green's formula and \eqref{eq_8_5}, we obtain that 
\begin{equation}
\label{eq_8_6}
\begin{aligned}
&\int_\Omega (\p_\lambda\gamma_1(x, 0)- \p_\lambda\gamma_2(x, 0)) (v^{(1)}\nabla v^{(2)}+  v^{(2)}\nabla v^{(1)} ) \cdot \nabla v^{(3)}dx\\
&=\int_{\p \Omega\setminus \Gamma}(\p_\lambda\gamma_1(x, 0)- \p_\lambda\gamma_2(x, 0)) (v^{(1)}\p_\nu v^{(2)}+  v^{(2)}\p_\nu v^{(1)} ) v^{(3)}dS\\
&+\int_{\p \Omega\setminus \Gamma}(\p_\nu w_1-\p_\nu w_2)v^{(3)}dS=0,
\end{aligned}
\end{equation}
provided that $\supp(v^{(3)}|_{\p \Omega})\subset \Gamma$. Thus,  \eqref{eq_8_6} gives that 
\[
\int_\Omega (\p_\lambda\gamma_1(x, 0)- \p_\lambda\gamma_2(x, 0)) ( v^{(1)}\nabla v^{(2)}+  v^{(2)}\nabla v^{(1)} ) \cdot \nabla v^{(3)}dx=0,
\]
for all $v^{(l)}\in C^\infty(\overline{\Omega})$ harmonic in $\Omega$ such that $\supp(v^{(l)}|_{\p \Omega})\subset \Gamma$, $l=1,2,3$.  By Theorem \ref{thm_main_2_semilinear} with $m=2$, we get $\p_\lambda\gamma_1(\cdot, 0)=\p_\lambda\gamma_2(\cdot, 0)$ in $\overline{\Omega}$.

Let $m\ge 3$ and assume that $\p_\lambda^k\gamma_1(\cdot, 0)=\p_\lambda^k\gamma_2(\cdot, 0)\text{ in }\overline{\Omega}$, 
for all $k=1,\dots, m-2$.  To prove that $\p_\lambda^{m-1}\gamma_1(\cdot, 0)=\p_\lambda^{m-1}\gamma_2(\cdot,\cdot, 0)$ in $\overline{\Omega}$, we perform the $m$th order linearization of the partial Dirichlet--to--Neumann map.  
In doing so, we let $u_j=u_j(x;\varepsilon)\in C^{2,\alpha}(\overline{\Omega})$ be the unique small solution of the Dirichlet problem, 
\begin{equation}
\label{eq_8_8}
\begin{cases}
\Delta u_j+\div \big( \sum_{k=1}^\infty \p^k_\lambda\gamma_j(x,0)\frac{ u_j^k}{k!}\nabla u_j \big)=0& \text{in}\quad \Omega,\\
u_j=\varepsilon_1f_1+\dots+ \varepsilon_mf_m & \text{on}\quad \p\Omega,
\end{cases}
\end{equation}
for $j=1,2$. Applying $\p_{\varepsilon_1}\dots\p_{\varepsilon_m}|_{\varepsilon=0}$ to \eqref{eq_8_8}, and arguing as in Theorem \ref{thm_main}, we obtain that 
\begin{equation}
\label{eq_8_11}
 \begin{cases}
\Delta w_j+\div \big(  \p^{m-1}_\lambda\gamma_j(x,0) \big(\sum_{k=1}^m  \prod_{r=1,r\ne k}^m v^{(r)}\nabla v^{(k)}\big)\big)=H_m& \text{in}\quad \Omega,\\
w_j=0 & \text{on}\quad \p\Omega.
\end{cases}
\end{equation}
Here $w_j=\p_{\varepsilon_1}\dots\p_{\varepsilon_m}u_j|_{\varepsilon=0}$ and 
\[
H_m(x):=-\div\bigg(\p_{\varepsilon_1}\dots\p_{\varepsilon_m}\bigg(\sum_{k=1}^{m-2}\p^k_\lambda\gamma_j(x, 0)\frac{ u_j^k}{k!}\nabla u_j \bigg)\bigg|_{\varepsilon=0}\bigg),
\] 
which is independent of $j$.

Now the equality $\Lambda_{\gamma_1}^\Gamma(\varepsilon_1f_1+\dots+\varepsilon_mf_m)=\Lambda_{\gamma_1}^\Gamma(\varepsilon_1f_1+\dots+\varepsilon_mf_m)$ for  all small $\varepsilon$ and all $f_k\in C^\infty(\p \Omega)$ with $\supp(f_k),\subset\Gamma$, $k=1,\dots, m$, implies \eqref{eq_8_4}.   Applying of $\p_{\varepsilon_1}\dots\p_{\varepsilon_m}|_{\varepsilon=0}$ to  \eqref{eq_8_4}, we obtain that 
\begin{equation}
\label{eq_8_12}
\begin{aligned}
(\p_\nu w_1&-\p_\nu w_2)|_{\Gamma} +(\p_\lambda^{m-1} \gamma_1(x,0)-\p_\lambda^{m-1} \gamma_2(x,0)) \bigg(\sum_{k=1}^m  \prod_{r=1,r\ne k}^m  v^{(r)}\p_\nu v^{(k)}\bigg)\bigg|_{\Gamma}=0.
\end{aligned}
\end{equation}

Proceeding as in the case $m=2$, and using \eqref{eq_8_11}, \eqref{eq_8_12}, we get 
\[
\int_\Omega (  \p^{m-1}_\lambda\gamma_1(x,0) - \p^{m-1}_\lambda\gamma_1(x,,0) )\bigg(\sum_{k=1}^m  \prod_{r=1,r\ne k}^m  v^{(r)}\nabla v^{(k)}\bigg)\cdot \nabla v^{(m+1)}dx=0,
\]
for all $v^{(l)}\in C^\infty(\overline{\Omega})$ harmonic in $\Omega$ such that $\supp(v^{(l)}|_{\p \Omega})\subset \Gamma$, $l=1,\dots, m+1$.  An application of  Theorem \ref{thm_main_2_semilinear} allows us to conclude that $\p_\lambda^{m-1}\gamma_1(\cdot, 0)=\p_\lambda^{m-1}\gamma_2(\cdot,0)$ in $\overline{\Omega}$. This completes the proof of Theorem \ref{thm_main_semilinear}.

\begin{appendix}
\section{Proof of Theorem \ref{thm_main} in the case of full data}
\label{sec_Thm_main_full_data}

Note that the result of Theorem \ref{thm_main} is new even in the case of full data, i.e. $\Gamma=\p \Omega$,  and the purpose of this appendix is to present an alternative simple proof in this case.

Using the linearization of the Dirichlet--to--Neumann map $\Lambda_{\gamma}^{\p \Omega}$, we shall see below that the proof of Theorem \ref{thm_main} in the full data case   will be a consequence of the following density result.
\begin{prop}
\label{prop_density}
Let $\Omega\subset \R^n$, $n\ge 2$, be a bounded open set with $C^\infty$ boundary, let $\omega\in \mathbb{S}^{n-1}$ be fixed and let $m=2,3, \dots,$ be fixed. Let $f\in L^\infty(\Omega)$ be such that 
\begin{equation}
\label{eq_app_1}
\int_\Omega f  (\omega\cdot \nabla v_1)^{m-1} \nabla v_1 \cdot \nabla v_2 dx=0,
\end{equation}
for all  functions $v_1, v_2\in C^\infty(\overline{\Omega})$ harmonic in $\Omega$.  Then $f=0$ in $\Omega$.  
\end{prop}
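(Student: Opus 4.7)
The plan is to use exponential harmonic functions (essentially CGO solutions, though in this full-data setting we do not need any boundary localization) to reduce the orthogonality identity \eqref{eq_app_1} to a vanishing Fourier transform condition on $f$.

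First, observe that for any $\zeta \in p^{-1}(0) = \{\zeta \in \C^n : \zeta \cdot \zeta = 0\}$ the exponential $v_\zeta(x) := e^{-i x \cdot \zeta}$ is harmonic on $\R^n$ and smooth up to $\overline{\Omega}$. Substituting $v_1 = v_\zeta$ and $v_2 = v_\eta$ with $\zeta, \eta \in p^{-1}(0)$ into \eqref{eq_app_1} and using $\nabla v_\zeta = -i \zeta\, v_\zeta$, the integrand collapses to a scalar multiple of $f(x)\, e^{-i x \cdot (m\zeta + \eta)}$. Explicitly, one obtains
\[
(-i)^{m-1}(\omega \cdot \zeta)^{m-1}(-\zeta \cdot \eta)\int_\Omega f(x)\, e^{-i x \cdot (m\zeta + \eta)}\, dx = 0.
\]
Extending $f$ by zero outside $\Omega$ yields a compactly supported $L^\infty$ function whose Fourier transform $\widehat f$ is entire on $\C^n$. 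Whenever the prefactor $(\omega \cdot \zeta)^{m-1}(\zeta \cdot \eta)$ is nonzero, the identity gives $\widehat f(m\zeta + \eta) = 0$.

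Next I need to show that the set of admissible $m\zeta + \eta$ is rich enough to force $\widehat f \equiv 0$. For any $\xi \in \R^n$ with $\xi \ne 0$, the requirement $\eta := \xi - m\zeta \in p^{-1}(0)$ together with $\zeta \in p^{-1}(0)$ is equivalent to the single linear constraint $\zeta \cdot \xi = |\xi|^2/(2m)$. For $n \ge 2$, one can exhibit such $\zeta$ explicitly, e.g.
\[
\zeta = \frac{1}{2m}\bigl(\xi + i e\bigr), \qquad e \in \R^n,\ e \cdot \xi = 0,\ |e| = |\xi|,
\]
which manifestly lies in $p^{-1}(0)$ and satisfies $\zeta \cdot \xi = |\xi|^2/(2m)$. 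A generic choice of $e$ (using the freedom available when $n \ge 2$, with a separate small argument when $n = 2$) guarantees $\omega \cdot \zeta \ne 0$; and the corresponding $\eta = \xi - m\zeta$ satisfies $\zeta \cdot \eta = \zeta \cdot \xi = |\xi|^2/(2m) \ne 0$. Hence for all $\xi$ in an open dense subset of $\R^n$ the admissibility conditions hold, so $\widehat f(\xi) = 0$ there.

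Finally, since $\widehat f$ is entire on $\C^n$ and vanishes on an open subset of $\R^n$, it vanishes identically, whence $f = 0$ in $\Omega$. The argument is straightforward; no serious obstacle is expected, which reflects the fact that full data makes this density result considerably easier than the partial-data Theorem \ref{thm_main_2}, where the need to work with harmonic functions vanishing on a closed portion of $\partial \Omega$ forces the introduction of a Runge-type approximation in the $W^{1,m+1}$ topology and the DKSU-style local-to-global passage.
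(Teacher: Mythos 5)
Your proof is correct and is essentially the same argument as the paper's: both substitute pure exponential harmonic functions $e^{x\cdot a}$ with $a\cdot a=0$, observe that the product collapses to a nonzero scalar times $f(x)e^{ix\cdot\xi}$, and conclude via vanishing of the Fourier transform on a dense set. The only cosmetic difference is that the paper parametrizes forward (picking $\xi\in\mathbb{S}^{n-1}$, $k\perp\xi$, and $h>0$ and reading off the resulting frequency $\tfrac{2m}{h}\xi$) while you parametrize backward (fixing a target frequency $\xi$ and solving $\xi=m\zeta+\eta$ for $\zeta,\eta\in p^{-1}(0)$); these are equivalent.
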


\begin{proof}
Let $\xi\in \mathbb{S}^{n-1}$ and consider $k\in \mathbb{S}^{n-1}$ such that $\xi\cdot k=0$. Let $h>0$.  Setting 
\[
v_1(x)=e^{\frac{1}{h}x\cdot(k+i\xi)}, \quad v_2(x)=e^{\frac{m}{h}x\cdot(-k+i\xi)}, 
\] 
so that  $v_1,v_2\in C^\infty(\R^n)$ and harmonic.  Substituting $v_1$ and $v_2$ into \eqref{eq_app_1} and using that $(k+i\xi)\cdot(-k+i\xi)=-2$, we get 
\[
(\omega\cdot (k+i\xi))^{m-1} \int_\Omega f(x)   e^{\frac{2m }{h} i x\cdot\xi} dx=0,
\]
and therefore, we have 
\[
 \int_\Omega f(x)   e^{\frac{2m }{h} ix\cdot\xi} dx=0,
\]
for all $\xi\in \mathbb{S}^{n-1}$, $\xi\cdot \omega\ne 0$, and all $h>0$.  Hence, $f=0$. 
\end{proof}

Let $\lambda\in \Sigma$ be arbitrary but fixed. We shall use an induction argument on $m\ge 2$ to prove that the equality 
\[
\Lambda_{\gamma_1}^\Gamma\bigg(\lambda+\sum_{k=1}^m \varepsilon_k f_k\bigg)=\Lambda_{\gamma_2}^\Gamma\bigg(\lambda+\sum_{k=1}^m \varepsilon_k f_k\bigg), 
\]
for all $|\varepsilon|$ sufficiently small and all $f_k\in C^\infty(\p \Omega)$, $\supp(f_k)\subset \Gamma$,  $k=1,\dots, m$, 
gives that $\p_z^{m-1}\gamma_1(x,\lambda, 0)=\p_z^{m-1}\gamma_1(x,\lambda, 0)$. 

First when $m=2$, taking $v^{(1)}=v^{(2)}$ in \eqref{eq_5_7} and using Proposition \ref{prop_density} with $m=2$, we get $\p_z\gamma_1(\cdot,\lambda, 0)=\p_z\gamma_2(\cdot,\lambda, 0)$ in $\Omega$. Now as $\lambda\in \Sigma$ is arbitrary,  we have $\p_z\gamma_1(\cdot,\cdot, 0)=\p_z\gamma_2(\cdot,\cdot, 0)$ in $\overline{\Omega}\times \C$.

Let $m=3,4, \dots$. Let $\lambda\in \Sigma$ be arbitrary but fixed.  Letting $v^{(1)}=\dots=v^{(m)}$ in \eqref{eq_5_13} and using Proposition \ref{prop_density}, we see that $\p_z^{m-1}\gamma_1(\cdot,\lambda, 0)=\p_z^{m-1}\gamma_2(\cdot,\lambda, 0)$ in $\Omega$. Again, as $\lambda\in \Sigma$ is arbitrary, we get $\p^{m-1}_z\gamma_1(\cdot,\cdot, 0)=\p^{m-1}_z\gamma_2(\cdot,\cdot, 0)$ in $\overline{\Omega}\times \C$.  This completes the proof of Theorem \ref{thm_main}  in the full data case.

\section{Well-posedness of the Dirichlet problem for a quasilinear conductivity equation}
\label{App_well_posedness}

In this appendix we shall recall a standard argument for showing the well-posedness of the Dirichlet problem for a quasilinear conductivity equation. 

Let $\Omega\subset \R^n$, $n\ge 2$, be a bounded open set with $C^\infty$ boundary. Let $k\in \N\cup \{0\}$ and $0<\alpha<1$ and let $C^{k,\alpha}(\overline{\Omega})$ be the standard H\"older space  on $\Omega$, see \cite{Krup_Uhlmann_2}, \cite{Hormander_1976}.  We observe that $C^{k,\alpha}(\overline{\Omega})$ is an algebra under pointwise multiplication, with 
\begin{equation}
\label{eq_app_ref_01}
\|uv\|_{C^{k, \alpha}(\overline{\Omega})} \le C\big(\|u\|_{C^{k,\alpha}(\overline{\Omega})} \|v\|_{L^\infty(\Omega)}+\|u\|_{L^\infty(\Omega)} \|v\|_{C^{k,\alpha}(\overline{\Omega})} \big), \quad u,v\in C^{k,\alpha}(\overline{\Omega}),
\end{equation}
see \cite[Theorem A.7]{Hormander_1976}. We write $C^{\alpha}(\overline{\Omega})=C^{0,\alpha}(\overline{\Omega})$.

Let $\omega\in\mathbb{S}^{n-1}=\{\omega\in  \R^n, |\omega|=1\}$, be fixed. Consider the Dirichlet problem for the following isotropic quasilinear conductivity equation, 
\begin{equation}
\label{eq_app_ref_1}
\begin{cases} 
\div (\gamma(x,u,\omega\cdot \nabla u)\nabla u)=0& \text{in}\quad \Omega,\\
u=\lambda+f & \text{on}\quad \p \Omega,
\end{cases}
\end{equation}
with $\lambda\in \C$.  We assume that the function $\gamma: \overline{\Omega}\times \C\times \C\to \C$ satisfies  the following conditions,   
\begin{itemize}
\item[(i)] the map $\C\times \C\ni (\tau, z)\mapsto \gamma(\cdot, \tau,z)$ is holomorphic with values in $C^{1,\alpha}(\overline{\Omega})$ with some $0<\alpha<1$, 
\item[(ii)] $\gamma(x,0,0)=1$.
\end{itemize}
It follows from (i) and (ii) that    $\gamma$ can be expand into a power series 
\begin{equation}
\label{eq_app_gamma}
\gamma(x,\tau, z)= 1+\sum_{j+k\ge 1, j\ge 0, k\ge 0}
\p_\tau^j\p^k_z\gamma(x,0,0)\frac{\tau^j z^k}{j!k!},\quad 
 \p_\tau^j\p^k_z\gamma(x,0,0)\in C^{1,\alpha}(\overline{\Omega}),
\end{equation}
converging in the $C^{1,\alpha}(\overline{\Omega})$ topology.

We have the following result.

\begin{thm}
\label{thm_well-posedness}
Let $\lambda\in \C$ be fixed. Then  under the above assumptions, there exist $\delta>0$, $C>0$ such that for any $f\in  B_{\delta}(\p \Omega):=\{f\in C^{2,\alpha}(\p \Omega): \|f\|_{C^{2,\alpha}(\p \Omega)}< \delta\}$, the problem \eqref{eq_app_ref_1} has a solution $u=u_{\lambda, f}\in C^{2,\alpha}(\overline{\Omega})$ which satisfies
\[
\|u-\lambda \|_{C^{2,\alpha}(\overline{\Omega})}\le C\|f\|_{C^{2,\alpha}(\p \Omega)}.
\]
The solution $u$ is unique within the class $\{u\in C^{2,\alpha}(\overline{\Omega}): \|u-\lambda\|_{C^{2,\alpha}(\overline{\Omega})}< C\delta \}$ and
 it is depends holomorphically on $f\in B_\delta(\p \Omega)$. Furthermore, the map
\[
B_\delta(\p \Omega)\to C^{1,\alpha}(\overline{\Omega}), \quad f\mapsto \p_\nu u|_{\p \Omega}
\]
is holomorphic.
\end{thm}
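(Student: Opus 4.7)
The plan is to apply the holomorphic implicit function theorem in Banach spaces. Define
$$F: C^{2,\alpha}(\overline{\Omega}) \times C^{2,\alpha}(\partial\Omega) \to C^{\alpha}(\overline{\Omega}) \times C^{2,\alpha}(\partial\Omega), \qquad F(u, f) = \bigl(\div(\gamma(\cdot, u, \omega\cdot\nabla u)\nabla u),\ u|_{\partial\Omega} - \lambda - f\bigr).$$
Since $\nabla \lambda = 0$, the constant $u \equiv \lambda$ solves the unperturbed equation, so $F(\lambda, 0) = (0,0)$. Holomorphicity of $F$ on a neighborhood of $(\lambda,0)$ will be verified by substituting $u = \lambda + v$ in the power series expansion \eqref{eq_app_gamma} and using the Banach algebra estimate \eqref{eq_app_ref_01} to establish absolute convergence of the resulting series in the $C^{1,\alpha}(\overline{\Omega})$-topology; post-composing with the bounded linear operators $\div: C^{1,\alpha}(\overline{\Omega}) \to C^{\alpha}(\overline{\Omega})$ and the trace $u \mapsto u|_{\partial\Omega}$ then finishes the check.

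Next I would compute the partial derivative $D_u F(\lambda, 0)$. Every term produced by differentiating the compositions in $\gamma$ acquires a factor of $\nabla \lambda = 0$, so only the contribution through the outer $\nabla u$ survives, giving
$$D_u F(\lambda, 0)\, v = \bigl(\div(\gamma(\cdot, \lambda, 0)\nabla v),\ v|_{\partial\Omega}\bigr).$$
The crux of the argument is to show that this map is a topological isomorphism onto $C^\alpha(\overline{\Omega}) \times C^{2,\alpha}(\partial\Omega)$, i.e.\ that the linear Dirichlet problem $\div(\gamma(\cdot,\lambda,0)\nabla v) = g$ in $\Omega$, $v|_{\partial\Omega} = h$, has a unique solution in $C^{2,\alpha}(\overline{\Omega})$ for every $(g, h)$, with continuous dependence on the data. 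In the setting of Theorem \ref{thm_main} one has $\gamma(x,\tau,0) \equiv 1$, so this operator is just the Laplacian and standard Schauder theory closes this step immediately. In the generality of the appendix one uses (ii) together with the continuity $\tau \mapsto \gamma(\cdot,\tau,0) \in C^{1,\alpha}(\overline{\Omega})$ to guarantee that $\gamma(\cdot,\lambda,0)$ is nowhere vanishing for the relevant $\lambda$ (so the operator is uniformly elliptic with $C^{1,\alpha}$ coefficient), and then invokes Schauder--Fredholm theory.

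With the isomorphism of $D_u F(\lambda,0)$ in hand, the holomorphic implicit function theorem produces $\delta = \delta_\lambda > 0$ and a holomorphic map $B_{\delta}(\partial\Omega) \ni f \mapsto u_{\lambda, f} \in C^{2,\alpha}(\overline{\Omega})$ with $F(u_{\lambda,f}, f) = 0$; the linear bound $\|u_{\lambda,f} - \lambda\|_{C^{2,\alpha}(\overline{\Omega})} \le C\|f\|_{C^{2,\alpha}(\partial\Omega)}$ follows from the continuity of the inverse of the linearization, and local uniqueness within a ball of radius $C\delta$ is built into the IFT. Holomorphicity of the Neumann trace $f \mapsto \gamma(\cdot, u_{\lambda,f}, \omega\cdot\nabla u_{\lambda,f})\partial_\nu u_{\lambda,f}|_{\partial\Omega}$ from $B_{\delta}(\partial\Omega)$ into $C^{1,\alpha}(\partial\Omega)$ is then obtained by composing the holomorphic solution map with the holomorphic map $u \mapsto \gamma(\cdot,u,\omega\cdot\nabla u)\partial_\nu u|_{\partial\Omega}$, whose holomorphicity rests on the same Banach-algebra power-series argument used for $F$. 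The hard part is the invertibility of the linearization in the complex-coefficient setting of the appendix, where one cannot invoke a maximum principle; under the hypotheses of Theorem \ref{thm_main} this difficulty evaporates because the linearization is literally the Laplacian.
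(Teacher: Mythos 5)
Your proposal matches the paper's own proof in all essentials: define the map $F$ on H\"older spaces, verify its holomorphicity via Cauchy estimates on the Taylor coefficients together with the Banach-algebra property of $C^{1,\alpha}(\overline{\Omega})$, compute the partial differential at the reference point $(\lambda,0)$, show it is a linear topological isomorphism, and conclude by the holomorphic implicit function theorem; the paper invokes \cite[Theorem 6.15]{Gil_Tru_book} for the invertibility of $(\Delta v, v|_{\partial\Omega})$, exactly as you suggest for the $\gamma(x,\tau,0)\equiv 1$ branch. You correctly observed that the paper's computation $\partial_u F(0,\lambda)v=(\Delta v, v|_{\partial\Omega})$ silently uses the introduction's hypothesis $\gamma(x,\tau,0)\equiv 1$ rather than the weaker $\gamma(x,0,0)=1$ stated in the appendix; one caution on your fallback argument for the weaker hypothesis is that continuity at $\tau=0$ only yields non-vanishing of $\gamma(\cdot,\lambda,0)$ for $\lambda$ near $0$, not all $\lambda\in\C$, and for a genuinely complex-valued coefficient the Dirichlet problem for $\div(\gamma\nabla\cdot)$ need not have trivial kernel, so Schauder--Fredholm alone would not produce the isomorphism. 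Since the theorem is invoked in the body of the paper only under $\gamma(x,\tau,0)\equiv 1$, your main branch is what the paper actually uses and is correct.
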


\begin{proof}
Let $\lambda\in \C$ be fixed, and let 
\[
B_1=C^{2,\alpha}(\p \Omega), \quad  B_2=C^{2,\alpha}(\overline{\Omega}), \quad B_3=C^{\alpha}(\overline{\Omega})\times C^{2,\alpha}(\p \Omega). 
\]
Consider the map,
\begin{equation}
\label{eq_app_ref_2}
F:B_1\times B_2\to B_3, \quad F(f,u)=(\div (\gamma(x,u,\omega\cdot \nabla u)\nabla u), u|_{\p \Omega}-\lambda-f).
\end{equation}
Following  \cite{LLLS}, we shall make use of the implicit function theorem for holomorphic maps between complex Banach spaces, see \cite[p. 144]{Poschel_Trub_book}.  First we check that $F$ enjoys the mapping property \eqref{eq_app_ref_2}. To that end in view of the fact that  $C^{1,\alpha}(\overline{\Omega})$ is an  algebra under pointwise multiplication, we only need to show that $\gamma(x,u,\omega\cdot \nabla u)\in C^{1,\alpha}(\overline{\Omega})$. In doing so, by Cauchy's estimates, we get 
\begin{equation}
\label{eq_app_ref_3}
\|\p_\tau^j\p_z^k \gamma(x,0,0)\|_{C^{1,\alpha}(\overline{\Omega})}\le \frac{j! k!}{R_1^jR_2^k}\sup_{|\tau|=R_1, |z|=R_2}\|\gamma(\cdot,\tau,  z)\|_{C^{1,\alpha}(\overline{\Omega})}, \quad R_1, R_2>0,
\end{equation}
for all $j\ge 0$, $k\ge 0$,  and $j+k\ge 1$. With the help of \eqref{eq_app_ref_01} and \eqref{eq_app_ref_3}, we obtain that 
\begin{equation}
\label{eq_app_ref_4}
\begin{aligned}
\bigg\| \p_\tau^j\p_z^k \gamma(x,0,0)&\frac{u^j(\omega\cdot \nabla u)^k}{j!k!} \bigg\|_{C^{1, \alpha}(\overline{\Omega})}\\
&\le \frac{C^{j+k}}{R_1^jR_2^k}\|u\|^j_{C^{1,\alpha}(\overline{\Omega})} \|\omega\cdot \nabla u\|^k_{C^{1,\alpha}(\overline{\Omega})} \sup_{|\tau|=R_1, |z|=R_2}\|\gamma(\cdot,\tau,  z)\|_{C^{1,\alpha}(\overline{\Omega})}.
\end{aligned}
\end{equation}
Taking $R_1=2C\|u\|_{C^{1,\alpha}(\overline{\Omega})}$ and $R_2=2C\|\omega \cdot \nabla u\|_{C^{1,\alpha}(\overline{\Omega})}$, we see that the series 
\[
\sum_{j+k\ge 1,j\ge 0, k\ge 0}\p_\tau^j\p^k_z\gamma(x,0,0)\frac{u^j (\omega\cdot\nabla u)^k}{j!k!}
\]
 converges in $C^{1, \alpha}(\overline{\Omega})$. Hence, in view of \eqref{eq_app_gamma}, $\gamma(x,u,\omega\cdot \nabla u)\in C^{1,\alpha}(\overline{\Omega})$.

Let us show that  $F$ in \eqref{eq_app_ref_2} is holomorphic.  First $F$ is  locally bounded as it is continuous in $(f,u)$. Hence, we only need to check that $F$ is  weak holomorphic,  see \cite[p. 133]{Poschel_Trub_book}. 
To that end, letting $(f_0, u_0), (f_1,u_1)\in B_1\times B_2$, we show that  the map
\[
 \mu \mapsto F((f_0, u_0)+\mu (f_1,u_1))
\]
is holomorphic in $\C$ with values in $B_3$.  Clearly, we only have to check that the map 
$\mu \mapsto \gamma(x,u_0(x)+\mu u_1(x), \omega\cdot (\nabla u_0(x)+\mu \nabla u_1(x)))$ is holomorphic in $\C$ with values in $C^{1,\alpha} (\overline{\Omega})$. This is a consequence of the fact that the series
\[
\sum_{j+k\ge 1,\j\ge 0, k\ge 0}
\p_\tau^j\p^k_z\gamma(x,0,0)\frac{(u_0+\mu u_1)^j (\omega\cdot\nabla (u_0+\mu u_1))^k}{j!k!} 
\]
converges in $C^{1,\alpha}(\overline{\Omega})$, locally  uniformly in  $\lambda\in \C$, in view of  \eqref{eq_app_ref_4}.

We have $F(0,\lambda)=0$ and the partial differential $\p_u F(0,\lambda): B_2\to B_3$ is given by
\[
\p_u F(0,\lambda)v=(\Delta v , v|_{\p \Omega}).
\]
It follows from \cite[Theorem 6.15]{Gil_Tru_book} that the map  $\p_u F(0,\lambda ): B_2\to B_3$ is a linear isomorphism.

An application of  the implicit function theorem, see \cite[p. 144]{Poschel_Trub_book}, shows that there exists  $\delta>0$ and a unique holomorphic map $S: B_\delta(\p \Omega)\to C^{2,\alpha}(\overline{\Omega})$ such that $S(0)=\lambda$ and $F(f, S(f))=0$ for all $f\in B_\delta(\p \Omega)$. Letting $u=S(f)$ and using that $S$ is Lipschitz continuous and $S(0)=\lambda$, we have
\[
\|u-\lambda\|_{C^{2,\alpha}(\overline{\Omega})}\le C\|f\|_{C^{2,\alpha}(\p \Omega)}.
\]
\end{proof}
\end{appendix}

\section*{Acknowledgements}

The work of Y.K is partially supported by  the French National Research Agency ANR (project MultiOnde) grant ANR-17-CE40-0029.
The research of K.K. is partially supported by the National Science Foundation (DMS 1815922). The research of G.U. is partially supported by NSF, a Walker Professorship at UW and a Si-Yuan Professorship at IAS, HKUST. Part of the work was supported by the NSF grant DMS-1440140 while K.K. and G.U. were in residence at MSRI in Berkeley, California, during Fall 2019 semester.


\begin{thebibliography} {1}

\bibitem{Agranovich_book}
 Agranovich, M., \emph{Sobolev spaces, their generalizations and elliptic problems in smooth and Lipschitz domains}, Springer Monographs in Mathematics. Springer, Cham, 2015.



\bibitem{Brezis_book}
Brezis, H., \emph{Functional analysis, Sobolev spaces and partial differential equations}, Universitext. Springer, New York, 2011.

\bibitem{Browder_61}
Browder, F., \emph{Functional analysis and partial differential equations. II.},  Math. Ann. \textbf{145} (1961/62), 81--226. 

\bibitem{CFeiz_2020_2}
C\^{a}rstea, C., Feizmohammadi, A., \emph{A density property for tensor products of gradients of harmonic functions and applications}, preprint, \textsf{https://arxiv.org/abs/2009.11217}.


\bibitem{CFeiz_2020_1}
C\^{a}rstea, C., Feizmohammadi, A., \emph{An inverse boundary value problem for certain anisotropic quasilinear elliptic equations}, preprint, \textsf{https://arxiv.org/abs/2008.04517}.


\bibitem{CNV_2019}
C\^{a}rstea, C., Nakamura, G., Vashisth, M., \emph{Reconstruction for the coefficients of a quasilinear elliptic partial differential equation}, Appl. Math. Lett. \textbf{98} (2019), 121--127.


\bibitem{DKSU}
 Dos Santos Ferreira, D.,  Kenig, C., Sj\"ostrand, J.,  Uhlmann, G., \emph{On the linearized local Calder\'on problem},
Math. Res. Lett. \textbf{16} (2009), no. 6, 955--970.




\bibitem{Eskin_book}
Eskin, G., \emph{Lectures on linear partial differential equations},  Graduate Studies in Mathematics, 123. American Mathematical Society, Providence, RI, 2011.

\bibitem{Feizmohammadi_Oksanen}
Feizmohammadi, A.,  Oksanen, L., \emph{An inverse problem for a semi-linear elliptic equation in Riemannian geometries}, J. Differential Equations \textbf{269} (2020), no. 6, 4683--4719.



\bibitem{Gil_Tru_book}
Gilbarg, D., Trudinger, N., \emph{Elliptic partial differential equations of second order},  Reprint of the 1998 edition. Classics in Mathematics. Springer-Verlag, Berlin, 2001.

\bibitem{Grisvard_book}
Grisvard, P. \emph{Elliptic problems in nonsmooth domains},  Monographs and Studies in Mathematics, 24. Pitman (Advanced Publishing Program), Boston, MA, 1985.

\bibitem{Grubb_book}
Grubb, G., \emph{Distributions and operators}, Graduate Texts in Mathematics, 252. Springer,
New York, 2009.

\bibitem{Hervas_Sun}
Hervas, D.,  Sun, Z., \emph{An inverse boundary value problem for quasilinear elliptic equations},
Comm. Partial Differential Equations \textbf{27} (2002), no. 11--12, 2449--2490.



\bibitem{Hormander_1976}
H\"ormander, L., \emph{The boundary problems of physical geodesy},
Arch. Rational Mech. Anal. \textbf{62} (1976), no. 1, 1--52.

\bibitem{Iman_Uh_Yam_2010}
Imanuvilov, O.,  Uhlmann, G., Yamamoto, M., \emph{The Calder\'on problem with partial data in two dimensions}, 
J. Amer. Math. Soc. \textbf{23} (2010), no. 3, 655--691. 

\bibitem{IY_2013}
Imanuvilov, O.,  Yamamoto, M., \emph{Unique determination of potentials and semilinear terms of semilinear elliptic equations from partial Cauchy data},
J. Inverse Ill-Posed Probl. \textbf{21} (2013), no. 1, 85--108.

\bibitem{Isakov_2001}
Isakov, V., \emph{Uniqueness of recovery of some quasilinear partial differential equations},  Comm. Partial Differential Equations \textbf{26} (2001), no. 11--12, 1947--1973.

\bibitem{IsaNach_1995}
Isakov, V.,  Nachman, A., \emph{Global uniqueness for a two-dimensional semilinear elliptic inverse problem},
Trans. Amer. Math. Soc. \textbf{347} (1995), no. 9, 3375--3390.




\bibitem{IsaSyl_94}
Isakov, V., Sylvester, J.,  \emph{Global uniqueness for a semilinear elliptic inverse
problem}, Comm. Pure Appl. Math. \textbf{47} (1994), no. 10, 1403--1410.



\bibitem{Kang_Nak_02}
Kang, K., Nakamura, G., \emph{Identification of nonlinearity in a conductivity equation via the Dirichlet--to--Neumann map},  Inverse Problems \textbf{18} (2002), no. 4, 1079--1088.

\bibitem{Krup_Uhlmann_2}
Krupchyk, K., Uhlmann, G., \emph{Partial data inverse problems for semilinear elliptic equations with gradient nonlinearities}, Math. Res. Lett., to appear. 

\bibitem{Krup_Uhlmann_non_linear_1}
Krupchyk, K., Uhlmann, G., \emph{A remark on partial data inverse problems for semilinear elliptic equations}, Proc. Amer. Math. Soc. \textbf{148} (2020), no. 2, 681--685.

\bibitem{Krup_Uhlmann_non_magnetic}
Krupchyk, K., Uhlmann, G., \emph{Inverse problems for nonlinear magnetic Schr\"odinger equations on conformally transversally anisotropic manifolds}, preprint, \textsf{https://arxiv.org/abs/2009.05089}. 

\bibitem{Kurylev_Lassas_Uhlmann_2018}
Kurylev, Y., Lassas, M., Uhlmann, G., \emph{Inverse problems for Lorentzian manifolds and non-linear hyperbolic equations}, Invent. Math. \textbf{212} (2018), no. 3, 781--857.

\bibitem{Lai_Ting}
Lai, R.-Y.,  Zhou, T., \emph{Partial Data Inverse Problems for Nonlinear Magnetic Schr\"odinger Equations}, preprint \textsf{https://arxiv.org/abs/2007.02475}.



\bibitem{LLLS}
Lassas, M., Liimatainen, T.,   Lin, Y-H.,  Salo, M., \emph{Inverse problems for elliptic equations with power type nonlinearities}, J. Math. Pures Appl., to appear.

\bibitem{LLLS_partial}
Lassas, M., Liimatainen, T.,   Lin, Y-H.,  Salo, M., \emph{Partial data inverse problems and simultaneous recovery of boundary and coefficients for semilinear elliptic equations}, Revista Matem\'atica Iberoamericana,  to appear. 


\bibitem{McLean_book}
 McLean, W., \emph{Strongly elliptic systems and boundary integral equations},  Cambridge University Press, Cambridge, 2000.



\bibitem{Medkova_book}
Medkov\'a, D., \emph{The Laplace equation. Boundary value problems on bounded and unbounded Lipschitz domains}, Springer, Cham, 2018. 




\bibitem{Mironescu_2015}
Mironescu, P., \emph{Note on Gagliardo's theorem "$\text{tr}\,W^{1,1}=L^1$''},   Ann. Univ. Buchar. Math. Ser. \textbf{6}(LXIV) (2015), no. 1, 99--103. 


\bibitem{Mun_Uhl_2018}
Mu\~noz, C.,  Uhlmann, G., \emph{The Calder\'on problem for quasilinear elliptic
equations}, Ann. Inst. H. Poincar\'e  Anal. Non Lin\'eaire \textbf{37} (2020), no. 5, 1143--1166.



\bibitem{Nachman_1996}
Nachman, A., \emph{Global uniqueness for a two-dimensional inverse boundary value problem},
Ann. of Math. (2) \textbf{143} (1996), no. 1, 71--96.


\bibitem{Poschel_Trub_book}
P\"oschel, J.,  Trubowitz, E., \emph{Inverse spectral theory},  Pure and Applied Mathematics, 130. Academic Press, Inc., Boston, MA, 1987.

\bibitem{Triebel_book_1978}
Triebel, H.,  \emph{Interpolation theory, function spaces, differential operators}, North-Holland Mathematical Library, 18. Amsterdam-New York, 1978

\bibitem{Triebel_1986}
Triebel, H., \emph{Spaces of Besov-Hardy-Sobolev type on complete Riemannian manifolds}, 
Ark. Mat. \textbf{24} (1986), no. 2, 299--337. 

\bibitem{Shankar_2019}
Shankar, R., \emph{Recovering a quasilinear conductivity from boundary measurements}, preprint, \textsf{https://arxiv.org/abs/1910.07890v1}.

\bibitem{Sun_96}
Sun, Z., \emph{On a quasilinear inverse boundary value problem},  Math. Z. \textbf{221} (1996), no. 2, 293--305.

\bibitem{Sun_2004}
Sun, Z., \emph{Inverse boundary value problems for a class of semilinear elliptic equations}, Adv. in Appl. Math. \textbf{32} (2004), no. 4, 791--800.


\bibitem{Sun_2010}
Sun, Z., \emph{An inverse boundary-value problem for semilinear elliptic equations}, Electron. J. Differential Equations 2010, No. 37, 5 pp.



\bibitem{Sun_Uhlm_97}
Sun, Z.,  Uhlmann, G., \emph{Inverse problems in quasilinear anisotropic media},
Amer. J. Math. \textbf{119} (1997), no. 4, 771--797.

\bibitem{SU_1987}
Sylvester, J., Uhlmann, G., \emph{A global uniqueness theorem for an inverse boundary value problem}, Ann. of Math. (2) \textbf{125} (1987), no. 1, 153--169.



\end{thebibliography}
\end{document}